\theoremstyle{plain}
\newtheorem{theorem}{Theorem}
\newtheorem{corollary}{Corollary}
\theoremstyle{definition}
\newtheorem{definition}{Definition}
\theoremstyle{remark}
\newtheorem{remark}{Remark}
\numberwithin{equation}{section} 
\newcommand{\vast}{\bBigg@{1}}
\newcommand{\Vast}{\bBigg@{2}}
\begin{document}
\title[Affine and Functional Form of Jensen's Inequalitiy]{Affine and Functional Form of Jensen's Inequalitiy
for $3$-convex Functions at a Point} 

\author{Imran Abbas Baloch}
\address{GC University\\ Abdus Salam School of Mathematical Sciences,\\
  Lahore,\\ Pakistan.}
\email{iabbasbaloch@gmail.com}

\thanks{The first author was supported in part by the Higher Education Commission of Pakistan.}

\author{Silvestru Sever Dragomir}

\address{Mathematics, College of Engineering and Science\\ Victoria University\\
  Melbourne City,\\ Australia.}


\email{sever.dragomir@vu.edu.au}

\begin{abstract}
 In this paper, we give the refinement of an extension of Jensen's inequality
to affine combinations. Furthermore, we present the functional form of
Jensen's inequality for continuous 3-convex functions of one variable at a
point.
\end{abstract}


\subjclass[2010]{Primary: 26A51. Secondary: 26B25, 52A40, 46E99}

\keywords{Affine combination, positive linear functional, convex function,
3-convex functions at a point, Jensen's inequality.}

\maketitle

\section{Introduction}

Let $\mathcal{X}$ be a real linear space. A set $\mathcal{A} \subset \mathcal{X}$ is affine if it contain all binomial affine combinations $\alpha a + \beta b$ of points $a, b \in \mathcal{A}$ and coefficient $ \alpha , \beta \in \mathbb{R}$ of sum $\alpha  + \beta = 1$. The affine hull of a set $\mathcal{I} \subset \mathcal{X}$ as the smallest affine set that contains $\mathcal{I}$ is denoted with aff$\mathcal{I}$. A function $f : \mathcal{A} \rightarrow \mathbb{R}$ is affine if the equality
$$ f(\alpha a + \beta b) = \alpha f(a) + \beta f(b)     $$
holds for all binomial affine combinations of points of $\mathcal{A}$.\\
A set $\mathcal{Y} \subset \mathcal{X}$ is convex if it contains all binomial convex combinations $\alpha a + \beta b$ of points $a, b \in \mathcal{Y}$ and non-negative coefficient $ \alpha , \beta \in \mathbb{R}$ of sum $\alpha  + \beta = 1$. The convex hull of of a set $\mathcal{I} \subset \mathcal{X}$ as the smallest convex set that contains $\mathcal{I}$ is denoted with conv$\mathcal{I}$. A function $f : \mathcal{Y} \rightarrow \mathbb{R}$ is convex if the inequality
$$ f(\alpha a + \beta b) = \alpha f(a) + \beta f(b)     $$
holds for all binomial convex combinations of points of $\mathcal{Y}$.\\
Let $\Omega$ be a non-empty set, and let $\mathbb{X}$ be a subspace of linear space of all real functions on the domain $\Omega$. Also, assume that the unit function defined by $I(x) =1 $ for every $x \in \Omega$ belongs to $\mathbb{X}$. Let $\mathcal{I} \subset \mathbb {R}$ be an interval, and let $\mathbb{X}_{\mathcal{I}} \subset \mathbb{X}$ be asubset containing all functions with image in $\mathcal{I}$. If $\alpha g + \beta h$ is a convex combination of functions $g, h \in \mathbb{X}_{\mathcal{I}}$, then the convex combination $\alpha g(x) + \beta h(x)$ is in $\mathcal{I}$ for every $x \in \Omega,$ which indicates that functions set $\mathbb{X}_{\mathcal{I}}$ is convex.\\
A linear functional $L: \mathbb{X} \rightarrow \mathbb{R}$ is positive (non-negative) if $L(g) \geq 0$ for every non-negative function $g \in \mathbb{X}$, and $L$ is unital(normalized) if $L(1) = 1$. If $ g \in \mathbb{X}$, then every unital positive functional $L$, the number $L(g)$ is in the closed interval of real numbers containing the image of the function $g$.\\
In 2015, Z. Pavi$\acute{c}$ \cite{ZP} gave the extension of Jensen's inequality to affine combinations in the following form
\begin{theorem}\label{IT1}
Let $\alpha_{i}, \beta_{j}, \gamma_{k} \geq 0$ be coefficients such that their sum $ \alpha = \sum_{i = 1}^{n}\alpha_{i}$, $\beta = \sum_{j = 1}^{m}\beta_{j}$, $\gamma = \sum_{k = 1}^{l}\gamma_{k}$ satisfy $ \alpha + \beta - \gamma = 1$ and $\alpha, \beta \in (0, 1]$. Let $a_{i}, b_{j}, c_{k} \in \mathbb{R}$ be points such that $c_{k} \in conv\{a, b\}$, where
\begin{equation}\label{IE1}
 a = \frac{1}{\alpha} \sum_{i = 1}^{n}\alpha_{i}a_{i}\;,\; b = \frac{1}{\beta} \sum_{j = 1}^{m}\beta_{j}b_{j} .
 \end{equation}
Then the affine combination
\begin{equation}\label{IE2}
  \sum_{i = 1}^{n}\alpha_{i}a_{i} +  \sum_{j = 1}^{m}\beta_{j}b_{j} - \sum_{k = 1}^{l}\gamma_{k}c_{k}
  \end{equation}
belongs to conv\{a, b\}, and for every convex functions $ f: conv\{a_{i}, b_{j}\} \rightarrow \mathbb{R}$ satisfies the inequality
\begin{equation}\label{IE3}
f \vast( \sum_{i = 1}^{n}\alpha_{i}a_{i} +  \sum_{j = 1}^{m}\beta_{j}b_{j} - \sum_{k = 1}^{l}\gamma_{k}c_{k} \vast) \leq \sum_{i = 1}^{n}\alpha_{i}f(a_{i}) +  \sum_{j = 1}^{m}\beta_{j}f(b_{j}) - \sum_{k = 1}^{l}\gamma_{k}f(c_{k}).
\end{equation}
\end{theorem}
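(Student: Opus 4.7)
The plan is to reduce the multi-index affine combination in (\ref{IE2}) to a single binomial convex combination on the segment $\mathrm{conv}\{a,b\}$ and then apply Jensen's inequality three times: to $a$, to $b$, and to each $c_k$.

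First I would rewrite the affine combination using (\ref{IE1}): since $\sum_i\alpha_i a_i=\alpha a$ and $\sum_j\beta_j b_j=\beta b$, the expression (\ref{IE2}) equals $\alpha a+\beta b-\sum_k\gamma_k c_k$. Because each $c_k\in\mathrm{conv}\{a,b\}$, I can write $c_k=\lambda_k a+(1-\lambda_k)b$ for some $\lambda_k\in[0,1]$. Setting $p=\sum_k\gamma_k\lambda_k$ and $q=\sum_k\gamma_k(1-\lambda_k)$, so that $p+q=\gamma$, the affine combination collapses to
\[
(\alpha-p)\,a+(\beta-q)\,b,
\]
whose coefficients sum to $\alpha+\beta-\gamma=1$ by hypothesis.

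Next I would verify that both $\alpha-p\ge 0$ and $\beta-q\ge 0$, so that this is actually a convex combination. Since $0\le\lambda_k\le 1$, both $p$ and $q$ are bounded above by $\gamma=\alpha+\beta-1$. The constraint $\beta\le 1$ then gives $\gamma\le\alpha$, hence $p\le\alpha$; symmetrically $\alpha\le 1$ yields $q\le\beta$. This proves the first claim, that (\ref{IE2}) lies in $\mathrm{conv}\{a,b\}$, and it is precisely the step where the hypothesis $\alpha,\beta\in(0,1]$ is genuinely used.

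For the inequality (\ref{IE3}), I would chain three applications of convexity. Convexity of $f$ on $\mathrm{conv}\{a,b\}$ applied to the binomial convex combination above gives
\[
f\!\left(\alpha a+\beta b-\sum_{k=1}^{l}\gamma_k c_k\right)\le(\alpha-p)f(a)+(\beta-q)f(b).
\]
Jensen's inequality on $a=\tfrac{1}{\alpha}\sum_i\alpha_i a_i$ and $b=\tfrac{1}{\beta}\sum_j\beta_j b_j$ yields $\alpha f(a)\le\sum_i\alpha_i f(a_i)$ and $\beta f(b)\le\sum_j\beta_j f(b_j)$. Finally, convexity applied at each $c_k=\lambda_k a+(1-\lambda_k)b$ gives $f(c_k)\le\lambda_k f(a)+(1-\lambda_k)f(b)$, and summing with weights $\gamma_k$ produces $pf(a)+qf(b)\ge\sum_k\gamma_k f(c_k)$. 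Substituting these three estimates into the bound above yields exactly (\ref{IE3}).

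The main obstacle is conceptual rather than computational: one must notice the representation $c_k=\lambda_k a+(1-\lambda_k)b$ and introduce the auxiliary weights $p,q$, which together collapse the multi-index affine combination into a single binomial convex combination on the chord $\mathrm{conv}\{a,b\}$. Once that reduction is in place, the nonnegativity check and the three Jensen applications are routine, and the sign of the $\gamma_k$-terms works out correctly because the reverse inequality $-f(c_k)\ge-\lambda_k f(a)-(1-\lambda_k)f(b)$ is exactly what is needed to absorb the subtractions on both sides.
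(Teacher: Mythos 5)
Your argument is correct and complete. Note that the paper itself states Theorem \ref{IT1} as a known result imported from the cited work of Pavi\'{c} and gives no proof of it, so there is nothing in this document to compare against; your proof is, in any case, the standard one: the representation $c_{k}=\lambda_{k}a+(1-\lambda_{k})b$ collapses (\ref{IE2}) to $(\alpha-p)a+(\beta-q)b$, the bounds $p\le\gamma\le\alpha$ and $q\le\gamma\le\beta$ (which is exactly where $\alpha,\beta\in(0,1]$ enters) show this is a genuine convex combination, and the three applications of convexity chain together with the correct signs. The only point worth making explicit is that $\mathrm{conv}\{a,b\}\subseteq\mathrm{conv}\{a_{i},b_{j}\}$, so all the points at which you evaluate $f$ lie in its stated domain; this is immediate since $a\in\mathrm{conv}\{a_{i}\}$ and $b\in\mathrm{conv}\{b_{j}\}$.
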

In 2014, Z. Pavi$\acute{c}$ \cite{ZP1} also gave the functional form of jensen's inequality for the continuous convex functions of one variable in the following form
\begin{theorem}\label{IT2}
Let $\mathcal{I} \subset \mathbb{R}$ be a closed interval, let $[a, b] \subset \mathcal{I}$, let function $g \in \mathbb{X}_{[a, b]}$ and function $h \in \mathbb{X}_{\mathcal{I} \backslash (a, b)}$. Let $f: \mathcal{I} \rightarrow \mathbb{R}$ be a continuous convex function such that $f(g), f(h) \in \mathbb{X}$. If a pair of unital positive linear functionals $L, H :\mathbb{X} \rightarrow \mathbb{R}$ satisfies
\begin{equation}\label{IE4}
L(g) = H(h),
\end{equation}
then
\begin{equation}\label{IE5}
L(f(g)) \leq  H(f(h)).
\end{equation}
\end{theorem}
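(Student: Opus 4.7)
\textbf{Proof proposal for Theorem \ref{IT2}.} The plan is to exploit the secant line over $[a,b]$ as a separating affine function that reverses its relationship with $f$ depending on whether the argument lies inside or outside $(a,b)$, and then to transport the pointwise inequalities through the positive linear functionals.

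First I would denote $c := L(g) = H(h)$ and note that since $g \in \mathbb{X}_{[a,b]}$ and $L$ is unital and positive, $c$ must lie in $[a,b]$. Next I would introduce the secant affine function
\[
\ell(x) \;=\; f(a) + \frac{f(b)-f(a)}{b-a}(x-a),
\]
which by convexity of $f$ satisfies two well-known geometric properties: $f(x) \le \ell(x)$ for every $x \in [a,b]$ (the chord lies above the graph on $[a,b]$), while $f(x) \ge \ell(x)$ for every $x \in \mathcal{I}\setminus(a,b)$ (outside $[a,b]$ the graph lies above the extended chord, a standard consequence of the monotonicity of the slope function of a convex function).

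Then I would apply these two pointwise inequalities with $x = g(\omega)$ and $x = h(\omega)$ respectively. Since $\ell$ is affine and $1 \in \mathbb{X}$, the compositions $\ell(g),\ell(h)$ belong to $\mathbb{X}$, and applying the unital positive functionals $L$ and $H$ preserves the inequalities and commutes with $\ell$. This yields
\[
L(f(g)) \;\le\; L(\ell(g)) \;=\; \ell(L(g)) \;=\; \ell(c),
\]
\[
H(f(h)) \;\ge\; H(\ell(h)) \;=\; \ell(H(h)) \;=\; \ell(c),
\]
and concatenating these two bounds through $\ell(c)$ gives precisely $L(f(g)) \le H(f(h))$, as required.

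The main obstacle, modest as it is, lies in justifying the second pointwise inequality $f \ge \ell$ on $\mathcal{I}\setminus(a,b)$. This is where continuity of $f$ (together with convexity) is used: one argues from the three-point monotonicity of slopes, namely $\frac{f(b)-f(a)}{b-a} \le \frac{f(x)-f(b)}{x-b}$ for $x>b$ and symmetrically $\frac{f(a)-f(x)}{a-x} \le \frac{f(b)-f(a)}{b-a}$ for $x<a$, each of which rearranges to $f(x) \ge \ell(x)$. Once this geometric lemma is in hand, the rest of the argument is a direct application of linearity, positivity and unitality of $L$ and $H$.
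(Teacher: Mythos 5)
Your proposal is correct, and it is essentially the standard argument: the paper itself quotes Theorem \ref{IT2} from Pavi\'{c} without proof, and the cited source proves it by exactly this secant-line (chord) comparison, using $f\le\ell$ on $[a,b]$, $f\ge\ell$ on $\mathcal{I}\setminus(a,b)$, and the linearity, positivity and unitality of $L$ and $H$ to meet at $\ell(c)$. The only point worth adding is the degenerate case $a=b$, where the chord is undefined and one falls back on the ordinary Jessen--Jensen inequality $f(H(h))\le H(f(h))$ via a supporting line at $c$.
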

Furthermore, Z. Pavi$\acute{c}$ \cite{ZP1} also gave some consequent results in the form of corollaries and using these corollaries, he gave another important result as follow
\begin{corollary}\label{IC1}
Let $\mathcal{I} \subset \mathbb{R}$ be a closed interval, let function $g \in \mathbb{X}_{[a, b]}$. Let $f: \mathcal{I} \rightarrow \mathbb{R}$ be a continuous convex function such that $f(g) \in \mathbb{X}$. If a unital positive linear functional $L:\mathbb{X} \rightarrow \mathbb{R}$ satisfies the implication (\ref{IE4}) $\Rightarrow$ (\ref{IE5}) of Theorem \ref{IT2} for $L = H$, then
\begin{equation}\label{IE6}
f(L(g)) \leq L(f(g)).
\end{equation}
\end{corollary}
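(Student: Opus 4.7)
The plan is to invoke the hypothesized implication of Theorem~\ref{IT2} (with $L=H$) on a cleverly chosen pair of functions $(\tilde g,\tilde h)$, arranged so that the resulting inequality collapses directly to~(\ref{IE6}). The key trick is to shrink the inner closed subinterval appearing in Theorem~\ref{IT2} down to the singleton $\{L(g)\}$: this interchanges the usual roles, letting the constant function $L(g)\cdot I$ serve as the ``inner'' variable while the given $g$ serves as the ``outer'' one. Once the roles are swapped in this way, the inequality in Theorem~\ref{IT2}, which for a convex $f$ says that inner averages of $f$ are dominated by outer averages of $f$, will read precisely as Jensen.

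Concretely, set $c:=L(g)$. Because $g$ takes values in $[a,b]\subset\mathcal{I}$ and $L$ is unital and positive, one has $c\in[a,b]\subset\mathcal{I}$. Apply the implication with respect to the degenerate closed subinterval $[c,c]\subset\mathcal{I}$, taking inner function $\tilde g := c\cdot I\in\mathbb{X}_{[c,c]}$ and outer function $\tilde h := g$. The containment $\tilde h\in\mathbb{X}_{\mathcal{I}\setminus(c,c)}=\mathbb{X}_{\mathcal{I}}$ is automatic because the open interval $(c,c)$ is empty, while $L(\tilde g)=c\cdot L(I)=c=L(g)=L(\tilde h)$, so the analogue of~(\ref{IE4}) holds. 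Invoking the assumed implication yields $L(f(\tilde g))\leq L(f(\tilde h))$. By linearity and $L(I)=1$, the left-hand side simplifies to $L(f(c)\cdot I)=f(c)=f(L(g))$, while the right-hand side is $L(f(g))$. Combining gives $f(L(g))\leq L(f(g))$, which is~(\ref{IE6}).

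The only delicate point is the admissibility of the degenerate closed subinterval $\{c\}$ within the framework of Theorem~\ref{IT2}. Nothing in that theorem's hypotheses forbids $a=b$: a singleton is a closed interval, and the containments $[a,b]\subset\mathcal{I}$, $g\in\mathbb{X}_{[a,b]}$, and $h\in\mathbb{X}_{\mathcal{I}\setminus(a,b)}$ all remain meaningful (the last reducing to $\mathbb{X}_{\mathcal{I}}$), so the specialization is legitimate. Should the author's conventions implicitly require $a<b$, one could instead perturb by applying the implication to $[c-\varepsilon,c+\varepsilon]\cap\mathcal{I}$ and then pass to the limit $\varepsilon\to 0^+$ via the continuity of $f$; but this refinement is not expected to be necessary. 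Either way, the corollary reduces to a one-line application of the assumed implication with the engineered pair $(\tilde g,\tilde h)=(L(g)\cdot I,\,g)$.
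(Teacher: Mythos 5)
Your proof is correct and is essentially the intended derivation: the paper imports Corollary~\ref{IC1} from \cite{ZP1} without printing a proof, and the argument there is precisely your specialization of the implication (\ref{IE4}) $\Rightarrow$ (\ref{IE5}) to the degenerate inner interval $\{L(g)\}$ with the pair $(L(g)\cdot I,\,g)$, using that $L(g)\in[a,b]$ for a unital positive $L$. Your handling of the degenerate-interval admissibility (with the $\varepsilon$-perturbation fallback) is a sensible extra precaution but not needed.
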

\begin{corollary}\label{IC2}
Let $[a_{1}, b_{1}] \subseteq...\subseteq [a_{n - 1}, b_{n - 1}] \subseteq \mathcal{I}.$ Let function $g_{1} \in \mathbb{X}_{[a_{1}, b_{1}]}$, let functions $g_{k} \in \mathbb{X}_{[a_{k}, b_{k}]\backslash (a_{k - 1}, b_{k - 1})}$ for $k = 2,...,n - 1$, and let function $g_{n} \in \mathbb{X}_{\mathcal{I} \backslash (a_{n -1}, b_{n -1})}.$ Let $f: \mathcal{I} \rightarrow \mathbb{R}$ be a continuous convex function such that $f(g_{i}) \in \mathbb{X}$. \\
If an n-tuple of unital positive linear functionals $L_{i}:\mathbb{X} \rightarrow \mathbb{R}$ satisfies
\begin{equation}\label{IE7}
L_{i}(g_{i}) =  L_{i + 1}(g_{i + 1})\;\; for\; i =1,...,n-1,
\end{equation}
then
\begin{equation}\label{IE8}
L_{i}(f(g_{i})) \leq  L_{i + 1}((g_{i + 1}))\;\; for\; i =1,...,n-1.
\end{equation}
\end{corollary}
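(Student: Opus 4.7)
The plan is to observe that Corollary \ref{IC2} reduces, under the nested-interval hypothesis, to $n-1$ independent applications of Theorem \ref{IT2}, one for each consecutive pair $(g_i, g_{i+1})$ with the functionals $L = L_i$, $H = L_{i+1}$. There is no coupling between different values of $i$ in the conclusion (IE8), so the whole argument is obtained by fixing $i \in \{1, \ldots, n-1\}$ and producing a single instance of Theorem \ref{IT2}.

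First I would verify the two range conditions required to invoke Theorem \ref{IT2} with $g \leftarrow g_i$, $h \leftarrow g_{i+1}$, and the sub-interval $[a,b] \leftarrow [a_i, b_i] \subseteq \mathcal{I}$. For $g_i$: when $i = 1$ the assumption gives $g_1 \in \mathbb{X}_{[a_1, b_1]}$ directly, and when $i \geq 2$ the inclusion $\mathbb{X}_{[a_i, b_i] \setminus (a_{i-1}, b_{i-1})} \subseteq \mathbb{X}_{[a_i, b_i]}$ is automatic, so in every case $g_i$ takes values in $[a_i, b_i]$. For $g_{i+1}$: if $i+1 \leq n-1$, the nesting $[a_i, b_i] \subseteq [a_{i+1}, b_{i+1}] \subseteq \mathcal{I}$ gives
\[
\mathbb{X}_{[a_{i+1}, b_{i+1}] \setminus (a_i, b_i)} \subseteq \mathbb{X}_{\mathcal{I} \setminus (a_i, b_i)},
\]
while for $i+1 = n$ the assumption $g_n \in \mathbb{X}_{\mathcal{I} \setminus (a_{n-1}, b_{n-1})}$ is exactly the required form.

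Second, with these membership statements in place together with the integrability hypothesis $f(g_i), f(g_{i+1}) \in \mathbb{X}$ and the continuity and convexity of $f$ on $\mathcal{I}$, the pair $(L_i, L_{i+1})$ of unital positive linear functionals satisfies the balance relation $L_i(g_i) = L_{i+1}(g_{i+1})$ by hypothesis (IE7). Theorem \ref{IT2} applied to this configuration yields exactly $L_i(f(g_i)) \leq L_{i+1}(f(g_{i+1}))$, which is the $i$-th instance of (IE8). Letting $i$ range over $1, \ldots, n-1$ completes the proof.

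The only subtlety, and thus the point I would be most careful about when writing out the argument, is checking that the nested-interval hypothesis really does deliver $g_{i+1}$ into $\mathcal{I}\setminus(a_i,b_i)$ for every $i$; the point is that removing the smaller open interval $(a_i, b_i)$ (rather than $(a_{i-1}, b_{i-1})$ or $(a_{n-1}, b_{n-1})$) is legitimate precisely because the intervals are nested. Once this bookkeeping is in place, no genuine obstacle remains; the whole corollary is a clean iteration of Theorem \ref{IT2}.
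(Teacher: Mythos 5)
Your proof is correct: the corollary is indeed just $n-1$ independent applications of Theorem \ref{IT2} with $g\leftarrow g_i$, $h\leftarrow g_{i+1}$, $[a,b]\leftarrow[a_i,b_i]$, and your bookkeeping of the range conditions (in particular that the nesting gives $g_{i+1}\in\mathbb{X}_{\mathcal{I}\setminus(a_i,b_i)}$) is exactly the point that needs checking. The paper quotes this corollary from Pavi\'c \cite{ZP1} without reproducing a proof, but the argument there is the same iteration you describe; note also that the displayed conclusion \eqref{IE8} contains a typo and should read $L_i(f(g_i))\leq L_{i+1}(f(g_{i+1}))$, which is what your argument delivers.
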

\begin{corollary}\label{IC3}
Let $\mathcal{I} \subset \mathbb{R}$ be a closed interval, and let functions $g_{1},...,g_{n} \in \mathbb{X}_{\mathcal{I}}$. Let $f: \mathcal{I} \rightarrow \mathbb{R}$ be a continuous convex function such that $f(g_{i}) \in \mathbb{X}$.\\
Then every n-tuple of positive linear functionals $L_{i}:\mathbb{X} \rightarrow \mathbb{R}$
with $\sum_{i =1}^{n} L_{i}(1) = 1$ satisfies the inclusion
\begin{equation}\label{IE9}
\sum_{i =1}^{n}L_{i}(g_{i}) \in \mathcal{I}
\end{equation}
and the inequality
\begin{equation}\label{IE10}
f\vast(\sum_{i =1}^{n}L_{i}(g_{i})\vast) \leq \sum_{i =1}^{n}L_{i}(f(g_{i})).
\end{equation}
\end{corollary}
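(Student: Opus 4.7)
The plan is to reduce the statement to the unital setting already covered by Corollary~\ref{IC1} via a simple normalisation. Set $\lambda_i := L_i(1) \geq 0$; by hypothesis $\sum_{i=1}^{n}\lambda_i = 1$. For each index with $\lambda_i > 0$, define $\tilde L_i := L_i/\lambda_i$. Each $\tilde L_i$ is then a unital positive linear functional, so by the remark recalled in the introduction (every unital positive functional maps a function into the closed interval containing its image), $p_i := \tilde L_i(g_i)$ lies in $\mathcal{I}$.

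For the inclusion (\ref{IE9}), rewrite
\[
\sum_{i=1}^{n} L_i(g_i) \;=\; \sum_{i:\,\lambda_i>0} \lambda_i\, p_i,
\]
which is a convex combination of points of $\mathcal{I}$; convexity of $\mathcal{I}$ gives the claimed membership. For the inequality (\ref{IE10}), Corollary~\ref{IC1} applied to each $\tilde L_i$ yields $f(p_i) \leq \tilde L_i(f(g_i))$. Combining this with the classical discrete Jensen inequality applied to the convex combination $\sum_i \lambda_i p_i$ of points of $\mathcal{I}$ produces
\[
f\Bigl(\sum_{i=1}^{n} L_i(g_i)\Bigr) \;\leq\; \sum_{i:\,\lambda_i>0}\lambda_i f(p_i) \;\leq\; \sum_{i:\,\lambda_i>0}\lambda_i \tilde L_i(f(g_i)) \;=\; \sum_{i=1}^{n} L_i(f(g_i)).
\]

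The main obstacle, and really the only delicate point, is the treatment of indices with $\lambda_i = L_i(1) = 0$, for which the normalisation $L_i/\lambda_i$ is undefined. A standard squeeze argument handles this: if $g_i$ takes values in a bounded sub-interval $[a,b] \subset \mathcal{I}$ then $a \cdot I \leq g_i \leq b \cdot I$ pointwise, and positivity of $L_i$ combined with $L_i(I) = 0$ forces $L_i(g_i) = 0$; the same argument applied to the continuous function $f(g_i)$ (which is bounded on $[a,b]$) gives $L_i(f(g_i)) = 0$. Thus the zero-weight indices contribute nothing to either side and the arguments above remain valid. Beyond this bookkeeping, the proof is a direct invocation of Corollary~\ref{IC1} followed by the elementary Jensen inequality for finite convex combinations.
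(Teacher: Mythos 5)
The paper itself gives no proof of Corollary \ref{IC3}: it is recalled from Pavi\'{c} \cite{ZP1}, where the standard argument runs through a support line of $f$ at the point $\sum_{i}L_{i}(g_{i})$. Your route --- normalising each $L_{i}$ with $\lambda_{i}=L_{i}(1)$ to a unital functional $\tilde L_{i}$, applying the one-functional Jessen inequality (Corollary \ref{IC1}) to each, and finishing with the discrete Jensen inequality for the convex combination $\sum_{i}\lambda_{i}p_{i}$ --- is a genuinely different and more modular reduction, and it is complete whenever every $g_{i}$ is bounded (in particular whenever $\mathcal{I}$ is compact).

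The gap sits exactly where you located the ``only delicate point.'' Your squeeze argument for the indices with $L_{i}(1)=0$ needs $g_{i}$ to take values in a \emph{bounded} interval $[a,b]$, but the hypothesis only places the image of $g_{i}$ in $\mathcal{I}$, which is a closed interval that may be unbounded, so $g_{i}$ may be unbounded. In that case $L_{i}(1)=0$ does not force $L_{i}(g_{i})=0$: take $\Omega=\mathbb{N}$, $\mathbb{X}=\mathrm{span}\{1,g\}$ with $g(k)=k$, and $L(\alpha\cdot 1+\beta g)=\beta$; a function $\alpha+\beta k$ that is nonnegative for all $k$ must have $\beta\ge 0$, so $L$ is positive, yet $L(1)=0$ and $L(g)=1$. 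Hence the identity $\sum_{i}L_{i}(g_{i})=\sum_{\lambda_{i}>0}\lambda_{i}p_{i}$, on which both (\ref{IE9}) and (\ref{IE10}) rest in your argument, can fail, and the terms $L_{i}(f(g_{i}))$ with $\lambda_{i}=0$ need not vanish either. (The same boundedness issue is hidden in your appeal to Corollary \ref{IC1}, which is stated for $g\in\mathbb{X}_{[a,b]}$ with $[a,b]$ compact.) To cover the general case, either add the hypothesis that the $g_{i}$ are bounded, or argue directly: the inclusion (\ref{IE9}) follows from $A\,L_{i}(1)\le L_{i}(g_{i})\le B\,L_{i}(1)$ for $\mathcal{I}=[A,B]$ (dropping whichever bound is infinite), and the inequality (\ref{IE10}) follows by applying each $L_{i}$ to the nonnegative function $f(g_{i})-f(p)\cdot 1-k(g_{i}-p\cdot 1)$, where $x\mapsto f(p)+k(x-p)$ is a support line of $f$ at $p=\sum_{i}L_{i}(g_{i})$, and summing over $i$; this needs no case distinction on $L_{i}(1)$.
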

\begin{theorem}\label{IT3}
Let $\mathcal{I} \subset \mathbb{R}$ be a closed interval, and let $[a, b] \subset \mathcal{I}$. Let functions $g_{1},...,g_{n} \in \mathbb{X}_{[a, b]}$ and $h_{1},...,h_{m} \in \mathbb{X}_{\mathcal{I}\backslash (a, b)}$. Let $f: \mathcal{I} \rightarrow \mathbb{R}$ be a continuous convex function such that $f(g_{i}), f(h_{j}) \in \mathbb{X}$.\\
If a pair of n-tuple of positive linear functionals $L_{i}, H_{j}:\mathbb{X} \rightarrow \mathbb{R}$ with $\sum_{i = 1}^{n} L_{i}(1) = \sum_{j = 1}^{m} H_{j}(1) = 1$ satisfies
\begin{equation}\label{IE11}
\sum_{i = 1}^{n} L_{i}(g_{i}) = \sum_{j = 1}^{m} H_{j}(h_{j}),
\end{equation}
then
\begin{equation}\label{IE12}
\sum_{i = 1}^{n} L_{i}(f(g_{i})) \leq \sum_{j = 1}^{m} H_{j}(f(h_{j})).
\end{equation}
\end{theorem}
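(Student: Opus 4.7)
The plan is to mimic the support line argument that underlies Theorem \ref{IT2}, using the secant line of $f$ through the endpoints $a$ and $b$ rather than a tangent. Set $c := \sum_{i=1}^{n} L_i(g_i) = \sum_{j=1}^{m} H_j(h_j)$ and define the affine function
\[
\ell(t) = f(a) + \frac{f(b) - f(a)}{b - a}\,(t - a),\qquad t \in \mathbb{R}.
\]
The convexity of $f$ on $\mathcal{I}$, combined with the fact that $\ell$ interpolates $f$ at $a$ and $b$, gives the two-sided control $f(t) \leq \ell(t)$ for $t \in [a,b]$ and $f(t) \geq \ell(t)$ for $t \in \mathcal{I}\setminus(a,b)$. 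This is exactly the dichotomy matched by the hypotheses $g_i \in \mathbb{X}_{[a,b]}$ and $h_j \in \mathbb{X}_{\mathcal{I}\setminus(a,b)}$.

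Next I would apply the functionals. Since $g_i$ takes values in $[a,b]$, the function $\ell(g_i) - f(g_i) \geq 0$ pointwise; it lies in $\mathbb{X}$ because $\ell(g_i) = f(a)\cdot \mathbf{1} + \tfrac{f(b)-f(a)}{b-a}(g_i - a\cdot \mathbf{1})$ is an $\mathbb{X}$-linear combination of $g_i$ and the unit, while $f(g_i)\in\mathbb{X}$ by hypothesis. Positivity of $L_i$ then yields $L_i(f(g_i)) \leq L_i(\ell(g_i))$. Summing over $i$ and using linearity together with the normalization $\sum_i L_i(\mathbf{1}) = 1$ gives
\[
\sum_{i=1}^{n} L_i(f(g_i)) \;\leq\; f(a) + \frac{f(b)-f(a)}{b-a}\Bigl(\sum_{i=1}^{n} L_i(g_i) - a\Bigr) = \ell(c).
\]
The symmetric computation for the $h_j$ side reverses the inequality in the pointwise comparison (because the values of $h_j$ lie outside $(a,b)$) and produces $\sum_{j=1}^{m} H_j(f(h_j)) \geq \ell(c)$. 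Chaining the two bounds through $\ell(c)$ delivers (\ref{IE12}).

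I do not anticipate a serious obstacle: the argument reduces to the familiar secant-line trick, and the only delicate point is bookkeeping with the normalization $\sum L_i(\mathbf{1}) = \sum H_j(\mathbf{1}) = 1$ when collapsing the sums $\sum_i L_i(\ell(g_i))$ and $\sum_j H_j(\ell(h_j))$ down to the single value $\ell(c)$. An alternative, which I would mention only if a more conceptual proof were desired, is to build auxiliary unital positive functionals $\widetilde{L}(\phi) = \sum_i L_i(\phi_i)$ and $\widetilde{H}(\psi) = \sum_j H_j(\psi_j)$ on the product space $\mathbb{X}^n$ (respectively $\mathbb{X}^m$) acting on the stacked functions $\widetilde{g} = (g_1,\dots,g_n)$ and $\widetilde{h} = (h_1,\dots,h_m)$, and then invoke Theorem \ref{IT2} directly; the direct secant-line proof, however, is shorter and self-contained.
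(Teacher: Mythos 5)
Your proof is correct: the secant line $\ell$ through $(a,f(a))$ and $(b,f(b))$ satisfies $f\leq\ell$ on $[a,b]$ and $f\geq\ell$ on $\mathcal{I}\setminus(a,b)$ by convexity, and applying the positive functionals, linearity, and the normalization collapses both sides onto $\ell(c)$ exactly as you describe (the only implicit assumption, shared with the statement itself, is $a<b$ so that $\ell$ is defined). The paper quotes this theorem from \cite{ZP1} without reproducing a proof, and your secant-line argument is precisely the mechanism used there for Theorem \ref{IT2} and its multi-functional extension, so this is essentially the intended approach.
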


%

\section{Results}
In \cite{BPP}, I. A. Baloch, J. Pe\v{c}ari\v{c}, M. Praljak defined a new class of functions  which is defined as follow
\begin{definition}\label{MD1}
Let $c \in I^{\circ}$, where $I$ is an arbitrary interval(open, closed or semi-open in either direction) in $\mathbb{R}$ and $I^{\circ}$ is its interior. We say that $f:I \rightarrow \mathbb{R}$ is $3$-convex function in point $c$ (respectively $3$-concave function in point $c$) if there exists a constant $A$ such that the function $F(x) = f(x) - \frac{A}{2} x^{2}$ is  concave (resp. convex) on $I \cap (-\infty,c]$ and convex (resp. concave) on $I \cap [c,\infty)$. A $f$ is $3$-concave function in point $c$ if $-f$ is $3$-convex function in point $c$.
\end{definition}
A property that explains the name of the class is the fact that a function is $3$-convex on an interval if and only if it is $3$-convex at every point of the interval (see \cite{BPP}). Note that $K^{c}_{1}(I)$ and $K^{c}_{2}(I)$) denote the class of all $3$-convex functions in point $c$ and the class of all $3$-concave functions in point $c$ respectively.
\begin{theorem}\label{MT1}
Let $\alpha_{i}, \beta_{j}, \gamma_{k} \geq 0$ and $\lambda_{i}, \mu_{j}, \nu_{k} \geq 0$  be coefficients such that their sum $ \alpha = \sum_{i = 1}^{n}\alpha_{i}$, $\beta = \sum_{j = 1}^{m}\beta_{j}$, $\gamma = \sum_{k = 1}^{l}\gamma_{k}$ satisfy $ \alpha + \beta - \gamma = 1$ and $\alpha, \beta \in (0, 1]$; $ \lambda = \sum_{i = 1}^{n}\lambda_{i}$, $\mu = \sum_{j = 1}^{m}\mu_{j}$, $\nu = \sum_{k = 1}^{l}\nu_{k}$ satisfy $ \lambda + \mu - \nu = 1$ and $\lambda, \mu \in (0, 1].$ Let $a_{i}, b_{j}, c_{k} \in [a, c]$ be points such that $c_{k} \in conv\{a_{i}, b_{j}\}$ and $r_{i}, s_{j}, t_{k} \in [c, b]$ be points such that $t_{k} \in conv\{r_{i}, s_{j}\}$ , where
$$   a = \frac{1}{\alpha} \sum_{i = 1}^{n}\alpha_{i}a_{i}\;,\; b = \frac{1}{\beta} \sum_{j = 1}^{m}\beta_{j}b_{j}\;,\;  r = \frac{1}{\lambda} \sum_{i = 1}^{n}\lambda_{i}r_{i}\;,\; s = \frac{1}{\mu} \sum_{j = 1}^{m}\mu_{j}s_{j} .        $$
Now, if
$$ \sum_{i = 1}^{n}\alpha_{i}(a_{i})^{2} + \sum_{j = 1}^{m}\beta_{j}(b_{j})^{2} - \sum_{k = 1}^{l}\gamma_{k}(c_{k})^{2} - \vast(\sum_{i = 1}^{n}\alpha_{i}a_{i} + \sum_{j = 1}^{m}\beta_{j}b_{j} - \sum_{k = 1}^{l}\gamma_{k}c_{k}\vast)^{2}$$
 \begin{equation}\label{ME1}
 = \sum_{i = 1}^{n}\lambda_{i}(r_{i})^{2} + \sum_{j = 1}^{m}\mu_{j}(s_{j})^{2} - \sum_{k = 1}^{l}\nu_{k}(t_{k})^{2} - \vast(\sum_{i = 1}^{n}\lambda_{i}r_{i} + \sum_{j = 1}^{m}\mu_{j}s_{j} - \sum_{k = 1}^{l}\nu_{k}t_{k}\vast )^{2}
 \end{equation}
 and also there exists $c \in I^{\circ}$ ($I = [a, b]$) such that
 \begin{equation}\label{ME2}
 \max\{ \max_{i}\{a_{i}\}, \max_{j}\{b_{j}\}, \max_{k}\{c_{k}\} \}\; \leq\; c \;\leq\; \min\{ \min_{i}\{r_{i}\}, \min_{j}\{s_{j}\}, \min_{k}\{t_{k}\} \}.
 \end{equation}
 Then for every $f \in K^{c}_{1}(I)$, the following inequality holds
 $$\sum_{i = 1}^{n}\alpha_{i}f(a_{i}) + \sum_{j = 1}^{m}\beta_{j}f(b_{j}) - \sum_{k = 1}^{l}\gamma_{k}f(c_{k}) - f\vast(\sum_{i = 1}^{n}\alpha_{i}a_{i} + \sum_{j = 1}^{m}\beta_{j}b_{j} - \sum_{k = 1}^{l}\gamma_{k}c_{k}\vast )$$
 \begin{equation}\label{ME3}
 \leq \sum_{i = 1}^{n}\alpha_{i}f(r_{i}) + \sum_{j = 1}^{m}\beta_{j}f(s_{j}) - \sum_{k = 1}^{l}\gamma_{k}f(t_{k}) - f\vast(\sum_{i = 1}^{n}\alpha_{i}r_{i} + \sum_{j = 1}^{m}\beta_{j}s_{j} - \sum_{k = 1}^{l}\gamma_{k}t_{k}\vast )
 \end{equation}
\end{theorem}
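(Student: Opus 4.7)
The plan is to exploit the defining decomposition of the class $K^{c}_{1}(I)$. By Definition \ref{MD1}, choose the constant $A$ for which $F(x) := f(x) - \frac{A}{2}x^{2}$ is concave on $I\cap(-\infty,c]$ and convex on $I\cap[c,\infty)$. By hypothesis (\ref{ME2}), the points $a_{i}, b_{j}, c_{k}$ all lie in $[a,c]$ and the points $r_{i}, s_{j}, t_{k}$ all lie in $[c,b]$, so $-F$ is convex on $\mathrm{conv}\{a_{i},b_{j}\}$ and $F$ is convex on $\mathrm{conv}\{r_{i},s_{j}\}$. This sets the stage for two applications of Theorem \ref{IT1}.

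First I would apply Theorem \ref{IT1} to $-F$ on $[a,c]$ with the coefficients $\alpha_{i},\beta_{j},\gamma_{k}$ and points $a_{i},b_{j},c_{k}$; the assumption $c_{k}\in\mathrm{conv}\{a,b\}$ is supplied in the hypothesis, so (\ref{IE3}) yields, after multiplying through by $-1$,
\[
\Delta^{L}(F)\;:=\;\sum_{i}\alpha_{i}F(a_{i}) + \sum_{j}\beta_{j}F(b_{j}) - \sum_{k}\gamma_{k}F(c_{k}) - F\Bigl(\sum_{i}\alpha_{i}a_{i}+\sum_{j}\beta_{j}b_{j}-\sum_{k}\gamma_{k}c_{k}\Bigr)\;\leq\;0.
\]
Next I would apply Theorem \ref{IT1} to $F$ on $[c,b]$ with the coefficients $\lambda_{i},\mu_{j},\nu_{k}$ and points $r_{i},s_{j},t_{k}$, using the analogous convex-hull hypothesis on the $t_{k}$; this yields $\Delta^{R}(F)\geq 0$, where $\Delta^{R}(F)$ is defined in the parallel way with the right-hand data.

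The decisive observation is that the functional $g\mapsto\Delta^{L}(g)$ (and likewise $\Delta^{R}$) is linear in $g$, so the decomposition $f=F+\tfrac{A}{2}x^{2}$ gives
\[
\Delta^{L}(f)=\Delta^{L}(F)+\tfrac{A}{2}\,\Delta^{L}(x^{2}),\qquad \Delta^{R}(f)=\Delta^{R}(F)+\tfrac{A}{2}\,\Delta^{R}(x^{2}).
\]
Hypothesis (\ref{ME1}) is precisely the identity $\Delta^{L}(x^{2})=\Delta^{R}(x^{2})$; combining this with $\Delta^{L}(F)\leq 0\leq \Delta^{R}(F)$ gives
\[
\Delta^{L}(f)\;\leq\;\tfrac{A}{2}\,\Delta^{L}(x^{2})\;=\;\tfrac{A}{2}\,\Delta^{R}(x^{2})\;\leq\;\Delta^{R}(f),
\]
which is exactly inequality (\ref{ME3}).

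The argument is essentially bookkeeping once the decomposition $f=F+\tfrac{A}{2}x^{2}$ is used. The main thing to be careful about is verifying that the hypotheses of Theorem \ref{IT1} are indeed met in each of the two applications, especially that $\mathrm{conv}\{a_{i},b_{j}\}\subseteq[a,c]$ and $\mathrm{conv}\{r_{i},s_{j}\}\subseteq[c,b]$, which follow from (\ref{ME2}). A secondary point is to recognise that the bracket on each side of (\ref{ME1}) is literally $\Delta^{L}(x^{2})$ and $\Delta^{R}(x^{2})$ respectively, so the hypothesis is doing exactly the job of making the quadratic correction cancel. The sign bookkeeping when converting between concavity of $F$ and convexity of $-F$ on the left subinterval is the one place where a slip is easy; apart from that the proof should be quite short.
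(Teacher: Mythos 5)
Your proposal is correct and follows essentially the same route as the paper: decompose $f=F+\tfrac{A}{2}x^{2}$ with $F$ concave on $[a,c]$ and convex on $[c,b]$, apply Theorem \ref{IT1} to $-F$ and to $F$ on the two subintervals to get $\Delta^{L}(F)\leq 0\leq\Delta^{R}(F)$, and cancel the quadratic corrections via (\ref{ME1}). Your version is in fact slightly cleaner in that you consistently use the coefficients $\lambda_{i},\mu_{j},\nu_{k}$ for the right-hand data, matching (\ref{ME1}), whereas the paper's displayed inequalities silently switch to $\alpha_{i},\beta_{j},\gamma_{k}$ there.
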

\begin{proof}
Since $f \in K^{c}_{1}(I)$, then there exists a constant $A$ such that $F(x) = f(x) - \frac{A}{2} x^{2}$ is concave on $I \cap (-\infty,c]$ and for $a_{i}, b_{j}, c_{k} \in [a, c]$ be points such that $c_{k} \in conv\{a_{i}, b_{j}\}$, so by using inequality (\ref{IE3}) we have
\begin{eqnarray*}
 0 &\geq& \sum_{i = 1}^{n}\alpha_{i}F(a_{i}) + \sum_{j = 1}^{m}\beta_{j}F(b_{j}) - \sum_{k = 1}^{l}\gamma_{k}F(c_{k}) - F\vast(\sum_{i = 1}^{n}\alpha_{i}a_{i} + \sum_{j = 1}^{m}\beta_{j}b_{j} - \sum_{k = 1}^{l}\gamma_{k}c_{k}\vast )\\
 &=&\sum_{i = 1}^{n}\alpha_{i}f(a_{i}) + \sum_{j = 1}^{m}\beta_{j}f(b_{j}) - \sum_{k = 1}^{l}\gamma_{k}f(c_{k}) - f\vast(\sum_{i = 1}^{n}\alpha_{i}a_{i} + \sum_{j = 1}^{m}\beta_{j}b_{j} - \sum_{k = 1}^{l}\gamma_{k}c_{k}\vast )\\
 &-& \frac{A}{2}\vast \{ \sum_{i = 1}^{n}\alpha_{i}(a_{i})^{2} + \sum_{j = 1}^{m}\beta_{j}(b_{j})^{2} - \sum_{k = 1}^{l}\gamma_{k}(c_{k})^{2} - \vast(\sum_{i = 1}^{n}\alpha_{i}a_{i} + \sum_{j = 1}^{m}\beta_{j}b_{j} - \sum_{k = 1}^{l}\gamma_{k}c_{k}\vast )^{2} \vast\}
 \end{eqnarray*}
 Also, since $f \in K^{c}_{1}(I)$ is convex on $I \cap [c,\infty)$, hence for $r_{i}, s_{j}, t_{k} \in [c, b]$ be points such that $t_{k} \in conv\{r_{i}, s_{j}\}$, so by using inequality (\ref{IE3}) we have
 \begin{eqnarray*}
 0 &\leq& \sum_{i = 1}^{n}\alpha_{i}F(r_{i}) + \sum_{j = 1}^{m}\beta_{j}F(s_{j}) - \sum_{k = 1}^{l}\gamma_{k}F(t_{k}) - F\vast(\sum_{i = 1}^{n}\alpha_{i}r_{i} + \sum_{j = 1}^{m}\beta_{j}s_{j} - \sum_{k = 1}^{l}\gamma_{k}t_{k}\vast )\\
 &=&\sum_{i = 1}^{n}\alpha_{i}f(r_{i}) + \sum_{j = 1}^{m}\beta_{j}f(s_{j}) - \sum_{k = 1}^{l}\gamma_{k}f(t_{k}) - f\vast(\sum_{i = 1}^{n}\alpha_{i}r_{i} + \sum_{j = 1}^{m}\beta_{j}s_{j} - \sum_{k = 1}^{l}\gamma_{k}t_{k}\vast )\\
 &-& \frac{A}{2}\vast \{ \sum_{i = 1}^{n}\alpha_{i}(r_{i})^{2} + \sum_{j = 1}^{m}\beta_{j}(s_{j})^{2} - \sum_{k = 1}^{l}\gamma_{k}(t_{k})^{2} - \vast(\sum_{i = 1}^{n}\alpha_{i}r_{i} + \sum_{j = 1}^{m}\beta_{j}s_{j} - \sum_{k = 1}^{l}\gamma_{k}t_{k}\vast )^{2} \vast\}
 \end{eqnarray*}
 From above, we have
 $$\sum_{i = 1}^{n}\alpha_{i}f(a_{i}) + \sum_{j = 1}^{m}\beta_{j}f(b_{j}) - \sum_{k = 1}^{l}\gamma_{k}f(c_{k}) - f\vast(\sum_{i = 1}^{n}\alpha_{i}a_{i} + \sum_{j = 1}^{m}\beta_{j}b_{j} - \sum_{k = 1}^{l}\gamma_{k}c_{k}\vast )$$
$$ - \frac{A}{2}\vast \{ \sum_{i = 1}^{n}\alpha_{i}(a_{i})^{2} + \sum_{j = 1}^{m}\beta_{j}(b_{j})^{2} - \sum_{k = 1}^{l}\gamma_{k}(c_{k})^{2} - \vast(\sum_{i = 1}^{n}\alpha_{i}a_{i} + \sum_{j = 1}^{m}\beta_{j}b_{j} - \sum_{k = 1}^{l}\gamma_{k}c_{k}\vast )^{2} \vast\}$$
$$ \leq 0 \leq $$
$$\sum_{i = 1}^{n}\alpha_{i}f(r_{i}) + \sum_{j = 1}^{m}\beta_{j}f(s_{j}) - \sum_{k = 1}^{l}\gamma_{k}f(t_{k}) - f\vast(\sum_{i = 1}^{n}\alpha_{i}r_{i} + \sum_{j = 1}^{m}\beta_{j}s_{j} - \sum_{k = 1}^{l}\gamma_{k}t_{k}\vast )$$
 \begin{equation}\label{ME4}
 - \frac{A}{2}\vast \{ \sum_{i = 1}^{n}\alpha_{i}(r_{i})^{2} + \sum_{j = 1}^{m}\beta_{j}(s_{j})^{2} - \sum_{k = 1}^{l}\gamma_{k}(t_{k})^{2} - \vast(\sum_{i = 1}^{n}\alpha_{i}r_{i} + \sum_{j = 1}^{m}\beta_{j}s_{j} - \sum_{k = 1}^{l}\gamma_{k}t_{k}\vast )^{2} \vast\}
 \end{equation}
 So
$$\sum_{i = 1}^{n}\alpha_{i}f(a_{i}) + \sum_{j = 1}^{m}\beta_{j}f(b_{j}) - \sum_{k = 1}^{l}\gamma_{k}f(c_{k}) - f\vast(\sum_{i = 1}^{n}\alpha_{i}a_{i} + \sum_{j = 1}^{m}\beta_{j}b_{j} - \sum_{k = 1}^{l}\gamma_{k}c_{k}\vast )$$
$$ - \frac{A}{2}\vast \{ \sum_{i = 1}^{n}\alpha_{i}(a_{i})^{2} + \sum_{j = 1}^{m}\beta_{j}(b_{j})^{2} - \sum_{k = 1}^{l}\gamma_{k}(c_{k})^{2} - \vast(\sum_{i = 1}^{n}\alpha_{i}a_{i} + \sum_{j = 1}^{m}\beta_{j}b_{j} - \sum_{k = 1}^{l}\gamma_{k}c_{k}\vast )^{2} \vast\}$$
$$ \leq \sum_{i = 1}^{n}\alpha_{i}f(r_{i}) + \sum_{j = 1}^{m}\beta_{j}f(s_{j}) - \sum_{k = 1}^{l}\gamma_{k}f(t_{k}) - f\vast(\sum_{i = 1}^{n}\alpha_{i}r_{i} + \sum_{j = 1}^{m}\beta_{j}s_{j} - \sum_{k = 1}^{l}\gamma_{k}t_{k}\vast )$$
 $$- \frac{A}{2}\vast \{ \sum_{i = 1}^{n}\alpha_{i}(r_{i})^{2} + \sum_{j = 1}^{m}\beta_{j}(s_{j})^{2} - \sum_{k = 1}^{l}\gamma_{k}(t_{k})^{2} - \vast(\sum_{i = 1}^{n}\alpha_{i}r_{i} + \sum_{j = 1}^{m}\beta_{j}s_{j} - \sum_{k = 1}^{l}\gamma_{k}t_{k}\vast )^{2} \vast\}$$
 by using (\ref{ME1}), we get (\ref{ME3}).
\end{proof}
\begin{remark}\label{MR1}
From the proof of Theorem \ref{MT1}, we have
$$\sum_{i = 1}^{n}\alpha_{i}f(a_{i}) + \sum_{j = 1}^{m}\beta_{j}f(b_{j}) - \sum_{k = 1}^{l}\gamma_{k}f(c_{k}) - f\vast(\sum_{i = 1}^{n}\alpha_{i}a_{i} + \sum_{j = 1}^{m}\beta_{j}b_{j} - \sum_{k = 1}^{l}\gamma_{k}c_{k}\vast )$$
\begin{equation}\label{ME5}
\leq \frac{A}{2}\vast \{ \sum_{i = 1}^{n}\alpha_{i}(a_{i})^{2} + \sum_{j = 1}^{m}\beta_{j}(b_{j})^{2} - \sum_{k = 1}^{l}\gamma_{k}(c_{k})^{2} - \vast(\sum_{i = 1}^{n}\alpha_{i}a_{i} + \sum_{j = 1}^{m}\beta_{j}b_{j} - \sum_{k = 1}^{l}\gamma_{k}c_{k}\vast )^{2} \vast\}
\end{equation}
and
$$\sum_{i = 1}^{n}\alpha_{i}f(r_{i}) + \sum_{j = 1}^{m}\beta_{j}f(s_{j}) - \sum_{k = 1}^{l}\gamma_{k}f(t_{k}) - f\vast(\sum_{i = 1}^{n}\alpha_{i}r_{i} + \sum_{j = 1}^{m}\beta_{j}s_{j} - \sum_{k = 1}^{l}\gamma_{k}t_{k}\vast )$$
\begin{equation}\label{ME6}
\geq \frac{A}{2}\vast \{ \sum_{i = 1}^{n}\alpha_{i}(r_{i})^{2} + \sum_{j = 1}^{m}\beta_{j}(s_{j})^{2} - \sum_{k = 1}^{l}\gamma_{k}(t_{k})^{2} - \vast(\sum_{i = 1}^{n}\alpha_{i}r_{i} + \sum_{j = 1}^{m}\beta_{j}s_{j} - \sum_{k = 1}^{l}\gamma_{k}t_{k}\vast )^{2} \vast\}
\end{equation}
So under assumption (\ref{ME1}), we can get a improvement of (\ref{ME3}) as follows
$$\sum_{i = 1}^{n}\alpha_{i}f(a_{i}) + \sum_{j = 1}^{m}\beta_{j}f(b_{j}) - \sum_{k = 1}^{l}\gamma_{k}f(c_{k}) - f\vast(\sum_{i = 1}^{n}\alpha_{i}a_{i} + \sum_{j = 1}^{m}\beta_{j}b_{j} - \sum_{k = 1}^{l}\gamma_{k}c_{k}\vast )$$
$$ \leq \frac{A}{2}\vast \{ \sum_{i = 1}^{n}\alpha_{i}(a_{i})^{2} + \sum_{j = 1}^{m}\beta_{j}(b_{j})^{2} - \sum_{k = 1}^{l}\gamma_{k}(c_{k})^{2} - \vast(\sum_{i = 1}^{n}\alpha_{i}a_{i} + \sum_{j = 1}^{m}\beta_{j}b_{j} - \sum_{k = 1}^{l}\gamma_{k}c_{k}\vast )^{2} \vast\}  $$
$$\vast( = \frac{A}{2}\vast \{ \sum_{i = 1}^{n}\alpha_{i}(r_{i})^{2} + \sum_{j = 1}^{m}\beta_{j}(s_{j})^{2} - \sum_{k = 1}^{l}\gamma_{k}(t_{k})^{2} - \vast(\sum_{i = 1}^{n}\alpha_{i}r_{i} + \sum_{j = 1}^{m}\beta_{j}s_{j} - \sum_{k = 1}^{l}\gamma_{k}t_{k}\vast )^{2} \vast\} \vast)$$
\begin{equation}\label{ME7}
\leq \sum_{i = 1}^{n}\alpha_{i}f(r_{i}) + \sum_{j = 1}^{m}\beta_{j}f(s_{j}) - \sum_{k = 1}^{l}\gamma_{k}f(t_{k}) - f\vast(\sum_{i = 1}^{n}\alpha_{i}r_{i} + \sum_{j = 1}^{m}\beta_{j}s_{j} - \sum_{k = 1}^{l}\gamma_{k}t_{k}\vast )
\end{equation}
\end{remark}
Assume that $\tilde{a} =\max_{i}\{a_{i}\}, \tilde{b} = \max_{j}\{b_{j}\}, \tilde{c} = \max_{k}\{c_{k}\}$ and $\tilde{r} = \min_{i}\{r_{i}\}, \tilde{s} =  \min_{j}\{s_{j}\},  \tilde{t} = \min_{k}\{t_{k}\}$. Also, let $\tilde{\tilde{a}}= \max \{\tilde{a}, \tilde{b}, \tilde{c}\}$ and $\tilde{\tilde{r}}= \min \{\tilde{r}, \tilde{s}, \tilde{t}\}$
Now, we give the next result which weakens the assumption (\ref{ME1}) such that inequality (\ref{ME5}) also holds.
\begin{theorem}\label{MT2}
Let $\alpha_{i}, \beta_{j}, \gamma_{k} \geq 0$ and $\lambda_{i}, \mu_{j}, \nu_{k} \geq 0$  be coefficients such that their sum $ \alpha = \sum_{i = 1}^{n}\alpha_{i}$, $\beta = \sum_{j = 1}^{m}\beta_{j}$, $\gamma = \sum_{k = 1}^{l}\gamma_{k}$ satisfy $ \alpha + \beta - \gamma = 1$ and $\alpha, \beta \in (0, 1]$; $ \lambda = \sum_{i = 1}^{n}\lambda_{i}$, $\mu = \sum_{j = 1}^{m}\mu_{j}$, $\nu = \sum_{k = 1}^{l}\nu_{k}$ satisfy $ \lambda + \mu - \nu = 1$ and $\lambda, \mu \in (0, 1].$ Let $a_{i}, b_{j}, c_{k} \in [a, c]$ be points such that $c_{k} \in conv\{a_{i}, b_{j}\}$ and $r_{i}, s_{j}, t_{k} \in [c, b]$ be points such that $t_{k} \in conv\{r_{i}, s_{j}\}$ , where
$$   a = \frac{1}{\alpha} \sum_{i = 1}^{n}\alpha_{i}a_{i}\;,\; b = \frac{1}{\beta} \sum_{j = 1}^{m}\beta_{j}b_{j}\;,\;  r = \frac{1}{\lambda} \sum_{i = 1}^{n}\lambda_{i}r_{i}\;,\; s = \frac{1}{\mu} \sum_{j = 1}^{m}\mu_{j}s_{j} . $$ such that
\begin{equation}\label{ME8}
 \tilde{\tilde{a}} \leq \tilde{\tilde{r}}
 \end{equation}
 and $f \in K^{c}_{1}(I)$ for some $c \in [\tilde{\tilde{a}}, \tilde{\tilde{r}}]$. Then if\\
    (a)\\
    $$f''_{-}(\tilde{\tilde{a}}) \geq 0$$
    and
    $$ \sum_{i = 1}^{n}\alpha_{i}(a_{i})^{2} + \sum_{j = 1}^{m}\beta_{j}(b_{j})^{2} - \sum_{k = 1}^{l}\gamma_{k}(c_{k})^{2} - \vast(\sum_{i = 1}^{n}\alpha_{i}a_{i} + \sum_{j = 1}^{m}\beta_{j}b_{j} - \sum_{k = 1}^{l}\gamma_{k}c_{k}\vast )^{2}$$
    $$
    \leq \sum_{i = 1}^{n}\lambda_{i}(r_{i})^{2} + \sum_{j = 1}^{m}\mu_{j}(s_{j})^{2} - \sum_{k = 1}^{l}\nu_{k}(t_{k})^{2} - \vast(\sum_{i = 1}^{n}\lambda_{i}r_{i} + \sum_{j = 1}^{m}\mu_{j}s_{j} - \sum_{k = 1}^{l}\nu_{k}t_{k}\vast )^{2}
    $$
    or\\
    (b)\\
    $$f''_{+}(\tilde{\tilde{r}}) \leq 0$$
    and
    $$ \sum_{i = 1}^{n}\alpha_{i}(a_{i})^{2} + \sum_{j = 1}^{m}\beta_{j}(b_{j})^{2} - \sum_{k = 1}^{l}\gamma_{k}(c_{k})^{2} - \vast(\sum_{i = 1}^{n}\alpha_{i}a_{i} + \sum_{j = 1}^{m}\beta_{j}b_{j} - \sum_{k = 1}^{l}\gamma_{k}c_{k}\vast )^{2}$$
    $$
    \geq \sum_{i = 1}^{n}\lambda_{i}(r_{i})^{2} + \sum_{j = 1}^{m}\mu_{j}(s_{j})^{2} - \sum_{k = 1}^{l}\nu_{k}(t_{k})^{2} - \vast(\sum_{i = 1}^{n}\lambda_{i}r_{i} + \sum_{j = 1}^{m}\mu_{j}s_{j} - \sum_{k = 1}^{l}\nu_{k}t_{k}\vast )^{2}
    $$
    or\\
    (c)
    $$  f''_{-}(\tilde{\tilde{a}}) < 0 < f''_{+}(\tilde{\tilde{r}}) \; \;and \;\; f\; is \; 3-convex,$$
    then (\ref{ME3}) holds.
\end{theorem}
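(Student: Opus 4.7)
The strategy is to observe that the proof of Theorem~\ref{MT1}, as recorded in Remark~\ref{MR1}, actually produces two separate one-sided bounds,
\[
\text{LHS of }(\ref{ME3}) \;\leq\; \frac{A}{2}\, X, \qquad \text{RHS of }(\ref{ME3}) \;\geq\; \frac{A}{2}\, Y,
\]
where $X$ and $Y$ denote the ``variance-like'' expressions involving the $a_i,b_j,c_k$ and the $r_i,s_j,t_k$ respectively, and $A$ is the constant attached to the decomposition $F(x)=f(x)-\tfrac{A}{2}x^{2}$ guaranteed by $f\in K^{c}_{1}(I)$. The equality hypothesis (\ref{ME1}) of Theorem~\ref{MT1} was used only to glue these two bounds together into (\ref{ME3}); hence, to weaken it, it suffices to verify $\tfrac{A}{2}X\leq \tfrac{A}{2}Y$ in each of the three cases.

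For cases (a) and (b) the plan is to pin down the sign of $A$. Since $F$ is concave on $I\cap(-\infty,c]$, and this set contains $\tilde{\tilde{a}}$, we have $f''_{-}(x)-A\leq 0$ throughout this interval; in particular $A\geq f''_{-}(\tilde{\tilde{a}})$. Symmetrically, convexity of $F$ on $I\cap[c,\infty)$ forces $A\leq f''_{+}(\tilde{\tilde{r}})$. In case~(a) the first bound yields $A\geq 0$, which together with $X\leq Y$ gives $\tfrac{A}{2}X\leq \tfrac{A}{2}Y$; chaining through Remark~\ref{MR1} delivers (\ref{ME3}). Case~(b) is handled identically with opposite signs: $A\leq 0$ and $X\geq Y$ again produce $\tfrac{A}{2}X\leq \tfrac{A}{2}Y$, the non-positivity of $A/2$ being what flips the direction of the variance inequality.

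For case~(c) the cleanest route is to discard the $A$ supplied by the original decomposition and to exploit the extra assumption of global $3$-convexity of $f$ to manufacture a new pair $(c^{*},A)$ with $A=0$. Indeed, $3$-convexity forces the one-sided second derivatives of $f$ to be non-decreasing, and the inequalities $f''_{-}(\tilde{\tilde{a}})<0<f''_{+}(\tilde{\tilde{r}})$ together with an intermediate-value step produce some $c^{*}\in[\tilde{\tilde{a}},\tilde{\tilde{r}}]$ with $f''_{-}(c^{*})\leq 0\leq f''_{+}(c^{*})$. Monotonicity of $f''$ then shows that $f$ itself is concave on $I\cap(-\infty,c^{*}]$ and convex on $I\cap[c^{*},\infty)$, so $f\in K^{c^{*}}_{1}(I)$ with the explicit choice $A=0$. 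Specialising Remark~\ref{MR1} to $A=0$ collapses the two bounds to $\text{LHS of }(\ref{ME3})\leq 0\leq \text{RHS of }(\ref{ME3})$.

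The main technical obstacle sits in case~(c), where one has to exhibit the transition point $c^{*}$ and check that $A=0$ is admissible; the monotonicity of the one-sided second derivatives invoked there is standard for $3$-convex functions but should be cited explicitly. By contrast, cases (a) and (b) are merely a sign check combined with Remark~\ref{MR1}, once one has noticed that the full equality (\ref{ME1}) was never actually needed inside the proof of Theorem~\ref{MT1}.
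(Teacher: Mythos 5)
Your proposal is correct and follows essentially the same route as the paper: both reduce the problem to showing $\tfrac{A}{2}X\leq\tfrac{A}{2}Y$, pin down the sign of $A$ via the two-sided bound $f''_{-}(\tilde{\tilde{a}})\leq A\leq f''_{+}(\tilde{\tilde{r}})$ (the paper derives this through second-order divided differences of $F$ on each subinterval, you through the sign of $F''$, which is the same estimate), and handle case (c) by re-choosing the transition point so that the admissible constant is $A=0$. No substantive difference.
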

\begin{proof}
The idea of proof is similar to proof of Theorem \ref{MT1}. Hence, by proceeding as in the proof of Theorem \ref{MT1}. From the inequality \ref{ME4}, we have
$$\frac{A}{2}\vast[ \sum_{i = 1}^{n}\alpha_{i}(r_{i})^{2} + \sum_{j = 1}^{m}\beta_{j}(s_{j})^{2} - \sum_{k = 1}^{l}\gamma_{k}(t_{k})^{2} - \vast(\sum_{i = 1}^{n}\alpha_{i}r_{i} + \sum_{j = 1}^{m}\beta_{j}s_{j} - \sum_{k = 1}^{l}\gamma_{k}t_{k}\vast )^{2} $$
$$- \vast \{ \sum_{i = 1}^{n}\alpha_{i}(a_{i})^{2} + \sum_{j = 1}^{m}\beta_{j}(b_{j})^{2} - \sum_{k = 1}^{l}\gamma_{k}(c_{k})^{2} - \vast(\sum_{i = 1}^{n}\alpha_{i}a_{i} + \sum_{j = 1}^{m}\beta_{j}b_{j} - \sum_{k = 1}^{l}\gamma_{k}c_{k}\vast )^{2} \vast\}\vast]$$
$$\leq \sum_{i = 1}^{n}\alpha_{i}f(r_{i}) + \sum_{j = 1}^{m}\beta_{j}f(s_{j}) - \sum_{k = 1}^{l}\gamma_{k}f(t_{k}) - f\vast(\sum_{i = 1}^{n}\alpha_{i}r_{i} + \sum_{j = 1}^{m}\beta_{j}s_{j} - \sum_{k = 1}^{l}\gamma_{k}t_{k}\vast )$$
$$
- \vast\{\sum_{i = 1}^{n}\alpha_{i}f(a_{i}) + \sum_{j = 1}^{m}\beta_{j}f(b_{j}) - \sum_{k = 1}^{l}\gamma_{k}f(c_{k}) - f\vast(\sum_{i = 1}^{n}\alpha_{i}a_{i} + \sum_{j = 1}^{m}\beta_{j}b_{j} - \sum_{k = 1}^{l}\gamma_{k}c_{k}\vast )\vast\}
$$
Now, due to the concavity of $F$ on $[a, c]$ and convexity of $F$ on $[c, b]$, for every distinct points $a_{j} \in [a, \tilde{\tilde{a}}]$ and $r_{j} \in [\tilde{\tilde{r}}, b]$, $j = 1, 2, 3,$ we have
$$     [a_{1}, a_{2}, a_{3}]f \leq A \leq    [r_{1}, r_{2}, r_{3}]f            $$
Letting $a_{j} \nearrow \tilde{\tilde{a}}$ and $r_{j} \searrow \tilde{\tilde{r}}$, we get (if exists)
$$  f''_{-}(\tilde{\tilde{a}}) \leq A \leq f''_{+}(\tilde{\tilde{r}})   $$
Therefore, if assumptions (a) or (b) holds, then
$$\frac{A}{2}\vast[ \sum_{i = 1}^{n}\alpha_{i}(r_{i})^{2} + \sum_{j = 1}^{m}\beta_{j}(s_{j})^{2} - \sum_{k = 1}^{l}\gamma_{k}(t_{k})^{2} - \vast(\sum_{i = 1}^{n}\alpha_{i}r_{i} + \sum_{j = 1}^{m}\beta_{j}s_{j} - \sum_{k = 1}^{l}\gamma_{k}t_{k}\vast )^{2} $$
$$- \vast \{ \sum_{i = 1}^{n}\alpha_{i}(a_{i})^{2} + \sum_{j = 1}^{m}\beta_{j}(b_{j})^{2} - \sum_{k = 1}^{l}\gamma_{k}(c_{k})^{2} - \vast(\sum_{i = 1}^{n}\alpha_{i}a_{i} + \sum_{j = 1}^{m}\beta_{j}b_{j} - \sum_{k = 1}^{l}\gamma_{k}c_{k}\vast )^{2} \vast\}\vast]$$
is positive and we conclude the result. If the assumption (c) holds, the $f''_{-}$ is left continuous, $f''_{+}$ is right continuous, they are both non-decreasing and $f''_{-} \leq f''_{+}$. Therefore, there exists $\tilde{c} \in [\tilde{\tilde{a}}, \tilde{\tilde{r}}]$ such that $f \in K_{1}^{c}(I)$ with associated constant $\tilde{A} =0$ and again, we can deduce the result.
\end{proof}
\begin{remark}\label{MR2}
Again from the proof of Theorem \ref{MT2}, we obtain the inequalities (\ref{ME5}) and (\ref{ME6}). Now, under assumption (a), (b) or (c) of Theorem \ref{MT2}, $A$ is positive or negative or zero respectively due to argument discussed in the proof. Therefore, we get a better improvement of (\ref{ME3}) then (\ref{ME7}). in this case as follow
$$\sum_{i = 1}^{n}\alpha_{i}f(a_{i}) + \sum_{j = 1}^{m}\beta_{j}f(b_{j}) - \sum_{k = 1}^{l}\gamma_{k}f(c_{k}) - f\vast(\sum_{i = 1}^{n}\alpha_{i}a_{i} + \sum_{j = 1}^{m}\beta_{j}b_{j} - \sum_{k = 1}^{l}\gamma_{k}c_{k}\vast )$$
$$ \leq \frac{A}{2}\vast \{ \sum_{i = 1}^{n}\alpha_{i}(a_{i})^{2} + \sum_{j = 1}^{m}\beta_{j}(b_{j})^{2} - \sum_{k = 1}^{l}\gamma_{k}(c_{k})^{2} - \vast(\sum_{i = 1}^{n}\alpha_{i}a_{i} + \sum_{j = 1}^{m}\beta_{j}b_{j} - \sum_{k = 1}^{l}\gamma_{k}c_{k}\vast )^{2} \vast\}  $$
$$\leq \frac{A}{2}\vast \{ \sum_{i = 1}^{n}\alpha_{i}(r_{i})^{2} + \sum_{j = 1}^{m}\beta_{j}(s_{j})^{2} - \sum_{k = 1}^{l}\gamma_{k}(t_{k})^{2} - \vast(\sum_{i = 1}^{n}\alpha_{i}r_{i} + \sum_{j = 1}^{m}\beta_{j}s_{j} - \sum_{k = 1}^{l}\gamma_{k}t_{k}\vast )^{2} \vast\} $$
\begin{equation}\label{ME9}
\leq \sum_{i = 1}^{n}\alpha_{i}f(r_{i}) + \sum_{j = 1}^{m}\beta_{j}f(s_{j}) - \sum_{k = 1}^{l}\gamma_{k}f(t_{k}) - f\vast(\sum_{i = 1}^{n}\alpha_{i}r_{i} + \sum_{j = 1}^{m}\beta_{j}s_{j} - \sum_{k = 1}^{l}\gamma_{k}t_{k}\vast )
\end{equation}
\end{remark}
Under the assumption of Theorem \ref{MT1} with $f \in K^{c}_{2}(I)$, the reverse of inequality (\ref{ME3}) holds. Now, we give only the statement of Theorem with weaker condition under which the reverse of inequality (\ref{ME3}) also holds for $f \in K^{c}_{2}(I)$.
\begin{theorem}\label{MT3}
Let $\alpha_{i}, \beta_{j}, \gamma_{k} \geq 0$ and $\lambda_{i}, \mu_{j}, \nu_{k} \geq 0$  be coefficients such that their sum $ \alpha = \sum_{i = 1}^{n}\alpha_{i}$, $\beta = \sum_{j = 1}^{m}\beta_{j}$, $\gamma = \sum_{k = 1}^{l}\gamma_{k}$ satisfy $ \alpha + \beta - \gamma = 1$ and $\alpha, \beta \in (0, 1]$; $ \lambda = \sum_{i = 1}^{n}\lambda_{i}$, $\mu = \sum_{j = 1}^{m}\mu_{j}$, $\nu = \sum_{k = 1}^{l}\nu_{k}$ satisfy $ \lambda + \mu - \nu = 1$ and $\lambda, \mu \in (0, 1].$ Let $a_{i}, b_{j}, c_{k} \in [a, c]$ be points such that $c_{k} \in conv\{a_{i}, b_{j}\}$ and $r_{i}, s_{j}, t_{k} \in [c, b]$ be points such that $t_{k} \in conv\{r_{i}, s_{j}\}$ , where
$$   a = \frac{1}{\alpha} \sum_{i = 1}^{n}\alpha_{i}a_{i}\;,\; b = \frac{1}{\beta} \sum_{j = 1}^{m}\beta_{j}b_{j}\;,\;  r = \frac{1}{\alpha} \sum_{i = 1}^{n}\lambda_{i}r_{i}\;,\; s = \frac{1}{\mu} \sum_{j = 1}^{m}\mu_{j}s_{j} . $$ such that
\begin{equation}\label{ME10}
 \tilde{\tilde{a}} \leq \tilde{\tilde{r}}
 \end{equation}
 and $f \in K^{c}_{2}(I)$ for some $c \in [\tilde{\tilde{a}}, \tilde{\tilde{r}}]$. Then if\\
    (a)\\
    $$f''_{-}(\tilde{\tilde{a}}) \leq 0$$
    and
    $$ \sum_{i = 1}^{n}\alpha_{i}(a_{i})^{2} + \sum_{j = 1}^{m}\beta_{j}(b_{j})^{2} - \sum_{k = 1}^{l}\gamma_{k}(c_{k})^{2} - \vast(\sum_{i = 1}^{n}\alpha_{i}a_{i} + \sum_{j = 1}^{m}\beta_{j}b_{j} - \sum_{k = 1}^{l}\gamma_{k}c_{k}\vast )^{2}$$
    $$
    \geq \sum_{i = 1}^{n}\lambda_{i}(r_{i})^{2} + \sum_{j = 1}^{m}\mu_{j}(s_{j})^{2} - \sum_{k = 1}^{l}\nu_{k}(t_{k})^{2} - \vast(\sum_{i = 1}^{n}\lambda_{i}r_{i} + \sum_{j = 1}^{m}\mu_{j}s_{j} - \sum_{k = 1}^{l}\nu_{k}t_{k}\vast )^{2}
    $$
    or\\
    (b)\\
    $$f''_{+}(\tilde{\tilde{r}}) \geq 0$$
    and
    $$ \sum_{i = 1}^{n}\alpha_{i}(a_{i})^{2} + \sum_{j = 1}^{m}\beta_{j}(b_{j})^{2} - \sum_{k = 1}^{l}\gamma_{k}(c_{k})^{2} - \vast(\sum_{i = 1}^{n}\alpha_{i}a_{i} + \sum_{j = 1}^{m}\beta_{j}b_{j} - \sum_{k = 1}^{l}\gamma_{k}c_{k}\vast )^{2}$$
    $$
    \leq \sum_{i = 1}^{n}\lambda_{i}(r_{i})^{2} + \sum_{j = 1}^{m}\mu_{j}(s_{j})^{2} - \sum_{k = 1}^{l}\nu_{k}(t_{k})^{2} - \vast(\sum_{i = 1}^{n}\lambda_{i}r_{i} + \sum_{j = 1}^{m}\mu_{j}s_{j} - \sum_{k = 1}^{l}\nu_{k}t_{k}\vast )^{2}
    $$
    or\\
    (c)
    $$  f''_{-}(\tilde{\tilde{a}}) < 0 < f''_{+}(\tilde{\tilde{r}}) \; \;and \;\; f\; is \; 3-concave,$$
    then reverse of (\ref{ME3}) holds.
\end{theorem}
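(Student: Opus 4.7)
The plan is to mirror the proof of Theorem MT2, with every inequality reversed because $f \in K^c_2(I)$ rather than $K^c_1(I)$. By Definition MD1 (equivalently, by applying the definition to $-f \in K^c_1(I)$), there exists a constant $A$ such that $F(x) = f(x) - \frac{A}{2}x^2$ is convex on $I \cap (-\infty, c]$ and concave on $I \cap [c, \infty)$, the opposite arrangement from MT1/MT2. Applying Theorem IT1 to the convex restriction $F|_{[a,c]}$ with the points $a_i, b_j, c_k$ and substituting $F(x) = f(x) - \frac{A}{2}x^2$ yields the reverse of (\ref{ME5}):
\begin{equation*}
\sum_{i=1}^n \alpha_i f(a_i) + \sum_{j=1}^m \beta_j f(b_j) - \sum_{k=1}^l \gamma_k f(c_k) - f\Bigl(\sum_i \alpha_i a_i + \sum_j \beta_j b_j - \sum_k \gamma_k c_k\Bigr) \geq \tfrac{A}{2}\, Q_a,
\end{equation*}
where $Q_a$ denotes the quadratic bracket appearing on the left of (\ref{ME1}). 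Applying Theorem IT1 to $-F|_{[c,b]}$ (which is convex there) with the points $r_i, s_j, t_k$ produces the reverse of (\ref{ME6}): the corresponding left-hand side $\text{LHS}_r$ is bounded \emph{above} by $\tfrac{A}{2}\, Q_r$.

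To convert these two one-sided bounds into the reverse of (\ref{ME3}), I would reuse the divided-difference argument from the proof of Theorem MT2, now with the roles of convex and concave swapped: convexity of $F$ on $[a,c]$ gives $[a_1, a_2, a_3]f \geq A$ for distinct triples in $[a, \tilde{\tilde{a}}]$, concavity of $F$ on $[c,b]$ gives $[r_1, r_2, r_3]f \leq A$, and passing to the appropriate one-sided limits yields $f''_+(\tilde{\tilde{r}}) \leq A \leq f''_-(\tilde{\tilde{a}})$. Hypothesis (a) then forces $A \leq 0$ and hypothesis (b) forces $A \geq 0$; in each case the paired $Q_a$-versus-$Q_r$ comparison is precisely the one needed to guarantee $\tfrac{A}{2}\, Q_a \geq \tfrac{A}{2}\, Q_r$, so the three pieces chain as
\begin{equation*}
\text{LHS}_a \ \geq \ \tfrac{A}{2}\, Q_a \ \geq \ \tfrac{A}{2}\, Q_r \ \geq \ \text{LHS}_r,
\end{equation*}
which is the reverse of (\ref{ME3}). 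Hypothesis (c) is the degenerate case in which the sign of $A$ cannot be read off either one-sided derivative; here I would reuse the Theorem MT2 trick of exploiting left-continuity of $f''_-$, right-continuity of $f''_+$, and their common monotonicity together with the global $3$-concavity of $f$ to locate $\tilde{c} \in [\tilde{\tilde{a}}, \tilde{\tilde{r}}]$ at which $f \in K^{\tilde{c}}_2(I)$ with associated constant $\tilde{A} = 0$; the quadratic terms then vanish and the chain collapses to $\text{LHS}_a \geq 0 \geq \text{LHS}_r$.

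The main obstacle is the triple layer of sign bookkeeping: the convex/concave swap in the definition of $K^c_2(I)$, the swap in the direction of the Pavi\'c-type inequality (\ref{ME5})/(\ref{ME6}) on each sub-interval, and the sign of $A$ extracted from the one-sided second-derivative condition. In each of (a), (b), (c) one must check that the assumed comparison between $Q_a$ and $Q_r$ is paired correctly with the sign of $A$ so that the middle step $\tfrac{A}{2}\, Q_a \geq \tfrac{A}{2}\, Q_r$ points in the required direction; an efficient alternative which avoids redoing any sign analysis is simply to apply Theorem MT2 verbatim to $-f \in K^c_1(I)$ (noting that $Q_a$, $Q_r$, $\tilde{\tilde{a}}$, $\tilde{\tilde{r}}$ do not involve $f$) and multiply the conclusion through by $-1$.
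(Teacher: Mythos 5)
Your overall strategy is the intended one (the paper in fact states Theorem \ref{MT3} without proof, and Remark \ref{MR3} records precisely the chain you describe), and the reduction to Theorem \ref{MT2} applied to $-f$ is a legitimate shortcut. The gap is in the one sentence you did not verify: the claim that in each of (a) and (b) ``the paired $Q_a$-versus-$Q_r$ comparison is precisely the one needed to guarantee $\frac{A}{2}Q_a \geq \frac{A}{2}Q_r$.'' It is not. Your (correct) divided-difference step gives $f''_{+}(\tilde{\tilde{r}}) \leq A \leq f''_{-}(\tilde{\tilde{a}})$, so hypothesis (a) forces $A \leq 0$; but (a) as printed assumes $Q_a \geq Q_r$, and $A \leq 0$ together with $Q_a - Q_r \geq 0$ yields $\frac{A}{2}(Q_a - Q_r) \leq 0$, i.e.\ $\frac{A}{2}Q_a \leq \frac{A}{2}Q_r$ --- the middle link of your chain points the wrong way. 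The same happens in (b): $A \geq 0$ paired with $Q_a \leq Q_r$ again gives $\frac{A}{2}Q_a \leq \frac{A}{2}Q_r$. So the argument you outline does not establish the theorem as stated; it establishes the version in which the $Q$-comparisons of (a) and (b) are interchanged. Your own ``verbatim'' reduction exposes this: under $f \mapsto -f$ the second-derivative conditions flip sign but the $Q$-conditions (which do not involve $f$) do not, so Theorem \ref{MT2}(a) applied to $-f$ requires $f''_{-}(\tilde{\tilde{a}}) \leq 0$ \emph{and} $Q_a \leq Q_r$, not $Q_a \geq Q_r$.

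Case (c) has the analogous defect: for a $3$-concave $f$ the generalized second derivative is non-increasing, which together with $f''_{+}(\tilde{\tilde{r}}) \leq A \leq f''_{-}(\tilde{\tilde{a}})$ is incompatible with the printed condition $f''_{-}(\tilde{\tilde{a}}) < 0 < f''_{+}(\tilde{\tilde{r}})$; the workable hypothesis is $f''_{-}(\tilde{\tilde{a}}) > 0 > f''_{+}(\tilde{\tilde{r}})$, under which your ``locate $\tilde{c}$ with $\tilde{A}=0$'' step goes through. In short: your method is right, but the ``triple layer of sign bookkeeping'' you yourself flagged as the main obstacle is exactly where the proof breaks for the statement as printed; a complete write-up must carry out that bookkeeping and either correct the hypotheses or note explicitly that the stated pairing cannot be used.
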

\begin{remark}\label{MR3}
From the proof of the Theorem \ref{MT3}, we obtain the reverse of inequalities (\ref{ME5}) and (\ref{ME6}). Now, due to the convexity of $F$ on $[a, c]$ and concavity of $F$ on $[c, b]$, for every distinct points $a_{j} \in [a, \tilde{\tilde{a}}]$ and $r_{j} \in [\tilde{\tilde{r}}, b]$, $j = 1, 2, 3,$ we have
$$     [a_{1}, a_{2}, a_{3}]f \geq A \geq    [r_{1}, r_{2}, r_{3}]f            $$
Letting $a_{j} \nearrow \tilde{\tilde{a}}$ and $r_{j} \searrow \tilde{\tilde{r}}$, we get (if exists)
$$  f''_{-}(\tilde{\tilde{a}}) \geq A \geq f''_{+}(\tilde{\tilde{r}})   $$
Now, under assumption (a), (b) or (c) of Theorem \ref{MT2}, $A$ is negative or positive or zero respectively due to argument discussed above. Therefore, we get a better improvement in this case as follow
$$\sum_{i = 1}^{n}\alpha_{i}f(a_{i}) + \sum_{j = 1}^{m}\beta_{j}f(b_{j}) - \sum_{k = 1}^{l}\gamma_{k}f(c_{k}) - f\vast(\sum_{i = 1}^{n}\alpha_{i}a_{i} + \sum_{j = 1}^{m}\beta_{j}b_{j} - \sum_{k = 1}^{l}\gamma_{k}c_{k}\vast )$$
$$ \geq \frac{A}{2}\vast \{ \sum_{i = 1}^{n}\alpha_{i}(a_{i})^{2} + \sum_{j = 1}^{m}\beta_{j}(b_{j})^{2} - \sum_{k = 1}^{l}\gamma_{k}(c_{k})^{2} - \vast(\sum_{i = 1}^{n}\alpha_{i}a_{i} + \sum_{j = 1}^{m}\beta_{j}b_{j} - \sum_{k = 1}^{l}\gamma_{k}c_{k}\vast )^{2} \vast\}  $$
$$\geq \frac{A}{2}\vast \{ \sum_{i = 1}^{n}\alpha_{i}(r_{i})^{2} + \sum_{j = 1}^{m}\beta_{j}(s_{j})^{2} - \sum_{k = 1}^{l}\gamma_{k}(t_{k})^{2} - \vast(\sum_{i = 1}^{n}\alpha_{i}r_{i} + \sum_{j = 1}^{m}\beta_{j}s_{j} - \sum_{k = 1}^{l}\gamma_{k}t_{k}\vast )^{2} \vast\} $$
\begin{equation}\label{ME11}
\geq \sum_{i = 1}^{n}\alpha_{i}f(r_{i}) + \sum_{j = 1}^{m}\beta_{j}f(s_{j}) - \sum_{k = 1}^{l}\gamma_{k}f(t_{k}) - f\vast(\sum_{i = 1}^{n}\alpha_{i}r_{i} + \sum_{j = 1}^{m}\beta_{j}s_{j} - \sum_{k = 1}^{l}\gamma_{k}t_{k}\vast )
\end{equation}
\end{remark}
\begin{theorem}\label{MT4}
Let $\mathcal{I} \subset \mathbb{R}$ be a closed interval, let $[a, b] \subset \mathcal{I}$, let function $g_{i} \in \mathbb{X}_{[a, b]}$ and function $h_{i} \in \mathbb{X}_{\mathcal{I} \backslash (a, b)}$ for $i = 1,2$. Let $f \in K^{c}_{1}(\mathcal{I})$ be continuous function such that $f(g_{i}), f(h_{i}) \in \mathbb{X}$. If a pair of unital positive linear functionals $L, H :\mathbb{X} \rightarrow \mathbb{R}$ satisfies
\begin{equation}\label{ME12}
L(g_{i}) = H(h_{i}) \;\;and\;\; H(h_{1}^{2}) - L(g_{1}^{2}) = H(h_{2}^{2}) - L(g_{2}^{2}),\;\;i \;=\;1,2,
\end{equation}
then inequality
\begin{equation}\label{ME13}
H(f(h_{1})) - L(f(g_{1})) \leq H(f(h_{2})) - L(f(g_{2}))
\end{equation}
holds.
\end{theorem}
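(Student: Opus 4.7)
The plan is to reduce the statement to two applications of Theorem~\ref{IT2} by passing to the auxiliary function $F(x)=f(x)-\tfrac{A}{2}x^{2}$ supplied by the membership $f\in K^{c}_{1}(\mathcal{I})$. Linearity of $L$ and $H$ gives, for each $i=1,2$,
\begin{equation*}
H(f(h_i))-L(f(g_i))=\bigl[H(F(h_i))-L(F(g_i))\bigr]+\tfrac{A}{2}\bigl[H(h_i^{2})-L(g_i^{2})\bigr],
\end{equation*}
and subtracting the $i=1$ identity from the $i=2$ identity, the second equality in \eqref{ME12} makes the quadratic pieces cancel. Thus \eqref{ME13} is equivalent to
\begin{equation*}
H(F(h_{1}))-L(F(g_{1}))\le H(F(h_{2}))-L(F(g_{2})).
\end{equation*}

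Next I would exploit that, by definition of $K^{c}_{1}(\mathcal{I})$, the function $F$ is convex on $\mathcal{I}\cap[c,\infty)$ and concave on $\mathcal{I}\cap(-\infty,c]$. For the pair $(g_2,h_2)$ seated on the convex side of $c$, Theorem~\ref{IT2} applied to the continuous convex function $F$ (with the relevant subinterval of $[a,b]$ and the corresponding portion of $\mathcal{I}\setminus(a,b)$) yields $L(F(g_{2}))\le H(F(h_{2}))$, hence $H(F(h_{2}))-L(F(g_{2}))\ge 0$. For the pair $(g_1,h_1)$ seated on the concave side of $c$, the same theorem applied to the continuous convex function $-F$ gives $L(F(g_{1}))\ge H(F(h_{1}))$, hence $H(F(h_{1}))-L(F(g_{1}))\le 0$. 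Chaining the two sign statements yields the reduced inequality and therefore \eqref{ME13}.

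The main obstacle will be making the splitting across $c$ precise: to invoke Theorem~\ref{IT2} on each half I need the interval $[a,b]$ and its complement $\mathcal{I}\setminus(a,b)$, as they pertain to each pair $(g_i,h_i)$, to sit within the correct half-interval determined by $c$, so that the monotone behaviour of $F$ (respectively $-F$) there matches the convexity hypothesis of Theorem~\ref{IT2}. Under the natural reading, in which the data indexed by $i=1$ lies in $\mathcal{I}\cap(-\infty,c]$ and that indexed by $i=2$ in $\mathcal{I}\cap[c,\infty)$, the localisation is immediate, the sign or magnitude of the constant $A$ plays no further role, and the argument closes at once.
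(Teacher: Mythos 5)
Your argument is essentially identical to the paper's own proof: both pass to $F(x)=f(x)-\tfrac{A}{2}x^{2}$, apply Theorem~\ref{IT2} to the concave piece for the pair $(g_{1},h_{1})$ (giving $H(F(h_{1}))-L(F(g_{1}))\le 0$) and to the convex piece for $(g_{2},h_{2})$ (giving $H(F(h_{2}))-L(F(g_{2}))\ge 0$), chain the two through $0$, and cancel the quadratic contributions via the second equality in \eqref{ME12}. The localisation issue you flag --- that $(g_{1},h_{1})$ must take values in $\mathcal{I}\cap(-\infty,c]$ and $(g_{2},h_{2})$ in $\mathcal{I}\cap[c,\infty)$, which the theorem statement does not actually say --- is a genuine omission, but the paper's proof relies on exactly the same unstated assumption, so your reading is the intended one.
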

\begin{proof}
Since $f \in K^{c}_{1}(\mathcal{I})$, there exists a constant $A$ such that $F(x) = f(x) - \frac{A}{2}x^{2}$ is concave on $\mathcal{I} \cap (-\infty, c]$, therefore by reverse of (\ref{IE5}) for $F$ on $\mathcal{I} \cap (-\infty, c]$, we get
\begin{eqnarray*}
0 &\geq& H(F(h_{1})) - L(F(g_{1}))\\
&=& H(f(h_{1})) - L(f(g_{1})) - \frac{A}{2}( H(h_{1}^{2}) - L(g_{1}^{2}) )
\end{eqnarray*}
Also, since $F(x) = f(x) - \frac{A}{2}x^{2}$ is convex on $\mathcal{I} \cap [c, \infty)$, therefore by (\ref{IE5}) for $F$ on $\mathcal{I} \cap (-\infty, c]$, we get
\begin{eqnarray*}
0 &\leq& H(F(h_{2})) - L(F(g_{2}))\\
&=& H(f(h_{2})) - L(f(g_{2})) - \frac{A}{2}( H(h_{2}^{2}) - L(g_{2}^{2}) )
\end{eqnarray*}
From above, we have
\begin{multline*}
H(f(h_{1})) - L(f(g_{1})) - \frac{A}{2}( H(h_{1}^{2}) - L(g_{1}^{2}) )\\ \leq 0 \leq\\ H(f(h_{2})) - L(f(g_{2})) - \frac{A}{2}( H(h_{2}^{2}) - L(g_{2}^{2}) ).
\end{multline*}
So
\begin{multline*}
H(f(h_{1})) - L(f(g_{1})) - \frac{A}{2}( H(h_{1}^{2}) - L(g_{1}^{2}) ) \\\leq H(f(h_{2})) - L(f(g_{2})) - \frac{A}{2}( H(h_{2}^{2}) - L(g_{2}^{2}) ),
\end{multline*}
therefore, by the use of (\ref{ME12}), we get (\ref{ME13}).
\end{proof}
\begin{remark}\label{MR4}
From the proof of the Theorem \ref{MT4}, we have
\begin{equation}\label{ME14}
H(f(h_{1})) - L(f(g_{1}))\leq  \frac{A}{2}( H(h_{1}^{2}) - L(g_{1}^{2}) )
\end{equation}
and
\begin{equation}\label{ME15}
H(f(h_{2})) - L(f(g_{2}))\geq  \frac{A}{2}( H(h_{2}^{2}) - L(g_{2}^{2}) )
\end{equation}
So, under assumption (\ref{ME12}), we can get a better improvement of (\ref{ME13}) as follow
\begin{multline}\label{ME16}
H(f(h_{1})) - L(f(g_{1}))\leq  \\ \frac{A}{2}( H(h_{1}^{2}) - L(g_{1}^{2}) ) \vast( = \frac{A}{2}( H(h_{2}^{2}) - L(g_{2}^{2}) )  \vast) \\ \leq H(f(h_{2})) - L(f(g_{2}))
\end{multline}
\end{remark}
\begin{corollary}\label{MC1}
Let $\mathcal{I} \subset \mathbb{R}$ be a closed interval, let $[a, b] \subset \mathcal{I}$, let function $g_{i} \in \mathbb{X}_{[a, b]}$ for $i = 1,2$. Let $f \in K^{c}_{1}(\mathcal{I})$ be continuous function such that $f(g_{i}) \in \mathbb{X}$. If a unital positive linear functionals $L :\mathbb{X} \rightarrow \mathbb{R}$ satisfies implication
(\ref{ME12}) $\Rightarrow$ (\ref{ME13}) for $L = H$ such that
\begin{equation}\label{ME17}
L(g_{1}^{2}) - (L(g_{1}))^{2} = L(g_{2}^{2}) - (L(g_{2}))^{2}
\end{equation}
then following inequality holds
\begin{equation}\label{ME18}
L(f(g_{1})) - f(L(g_{1})) \leq L(f(g_{2})) - f(L(g_{2}))
\end{equation}
\end{corollary}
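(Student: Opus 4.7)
The plan is to apply the implication (\ref{ME12}) $\Rightarrow$ (\ref{ME13}) of Theorem \ref{MT4}, specialised to $H=L$, to a particular substitution: in the two pairs of functions fed into (\ref{ME12}), I would place the constant function $L(g_{i})\cdot I$ into the ``inner'' slot (playing the role of $g_{i}$ in the statement of Theorem \ref{MT4}) and the given function $g_{i}$ of the corollary into the ``outer'' slot (playing the role of $h_{i}$). This is exactly the device by which Corollary \ref{IC1} is extracted from Theorem \ref{IT2}, and it should collapse the two-sided comparison (\ref{ME13}) of Theorem \ref{MT4} into the Jensen-gap inequality (\ref{ME18}).

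The verifications are routine consequences of unitality. Because $L$ is unital and positive and $g_{i}\in\mathbb{X}_{[a,b]}$, the scalar $L(g_{i})$ lies in $[a,b]$, so each constant function $L(g_{i})\cdot I$ is itself a member of $\mathbb{X}_{[a,b]}$, hence an admissible input. Unitality gives $L(L(g_{i})\cdot I)=L(g_{i})$, which makes the first equality of (\ref{ME12}) automatic for this substitution; likewise $L((L(g_{i})\cdot I)^{2})=(L(g_{i}))^{2}$ converts the second equality of (\ref{ME12}) into
\begin{equation*}
L(g_{1}^{2})-(L(g_{1}))^{2} \;=\; L(g_{2}^{2})-(L(g_{2}))^{2},
\end{equation*}
which is precisely the hypothesis (\ref{ME17}). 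Finally, $L(f(L(g_{i})\cdot I))=f(L(g_{i}))$, so the conclusion (\ref{ME13}) with this substitution reduces to exactly (\ref{ME18}).

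The only point requiring care is conceptual rather than computational: the statement of Theorem \ref{MT4} restricts the ``outer'' functions $h_{i}$ to take values in $\mathcal{I}\setminus(a,b)$, whereas my substitution places $g_{i}\in\mathbb{X}_{[a,b]}$ into that slot. This is not an obstacle, because the corollary \emph{postulates} that $L$ satisfies the implication (\ref{ME12})$\Rightarrow$(\ref{ME13}) whenever $H=L$, without reimposing the inner/outer constraint on the inputs. Thus one must read the hypothesis of the corollary precisely and then use the implication as a black box; once this is recognised, the substitution outlined above delivers (\ref{ME18}) with no further work.
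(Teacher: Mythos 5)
Your proposal is correct and is precisely the intended derivation: the paper states Corollary \ref{MC1} without proof, but it is obtained from Theorem \ref{MT4} exactly as you describe, by taking $H=L$, placing the constant functions $L(g_{i})\cdot I$ in the inner slots and $g_{i}$ in the outer slots, and using unitality to reduce (\ref{ME12}) to (\ref{ME17}) and (\ref{ME13}) to (\ref{ME18}), mirroring how Corollary \ref{IC1} is extracted from Theorem \ref{IT2}. Your explicit remark that the inner/outer range constraint of Theorem \ref{MT4} is not reimposed --- the implication being assumed as a property of $L$ --- is exactly the reading the corollary's phrasing requires, so no gap remains.
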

\begin{corollary}\label{MC2}
Let $ [a_{1}, b_{1}] \subseteq...\subseteq[a_{n - 1}, b_{n - 1}] \subseteq \mathcal{I}.$ Let function $g_{1}, h_{1} \in \mathbb{X}_{[a_{1}, b_{1}]},$ let $g_{k}, h_{k} \in \mathbb{X}_{[a_{k}, b_{k}] \backslash (a_{k - 1}, b_{k - 1})}$ for $k = 2,...,n - 1,$ and let function $g_{n}, h_{n} \in \mathbb{X}_{\mathcal{I} \backslash (a_{n - 1}, b_{n - 1})}.$ Let $f \in K^{c}_{1}(\mathcal{I})$ be continuous function such that $f(g_{i}) \in \mathbb{X}.$\\
If an n-tuple of unital positive linear functionals $L_{i}:\mathbb{X} \rightarrow \mathbb{R}$ satisfies
\begin{equation}\label{ME19}
L_{i}(g_{i}) = L_{i + 1}(g_{i + 1})\;\;and\;\;L_{i}(h_{i}) = L_{i + 1}(h_{i + 1})\;\;for\;i\;=1,...,n -1,
\end{equation}
such that
\begin{equation}\label{ME20}
L_{i + 1}(g_{i + 1}^{2}) - L_{i}(g_{i}^{2}) = L_{i + 1}(h_{i + 1}^{2}) - L_{i}(h_{i}^{2}),
\end{equation}
then
\begin{equation}\label{ME21}
L_{i + 1}f(g_{i + 1}) - L_{i}f(g_{i}) \leq L_{i + 1}f(h_{i + 1}) - L_{i}f(h_{i})\;\;for\;i\;=1,...,n -1.
\end{equation}
\end{corollary}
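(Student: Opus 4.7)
The plan is to derive Corollary \ref{MC2} as a direct iterated application of Theorem \ref{MT4}, one application per index. For each fixed $i \in \{1,\dots,n-1\}$, I would apply Theorem \ref{MT4} on the interval $[a_i, b_i] \subset \mathcal{I}$, with the choices $L := L_i$ and $H := L_{i+1}$. The functions $g_i, h_i$ will play the roles of the two ``inside'' functions (that is, $g_1$ and $g_2$ in the statement of Theorem \ref{MT4}), while $g_{i+1}, h_{i+1}$ will play the roles of the two ``outside'' functions ($h_1$ and $h_2$ in Theorem \ref{MT4}).

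First I would verify the range conditions. For the inside pair: if $i = 1$, then $g_1, h_1 \in \mathbb{X}_{[a_1, b_1]}$ by hypothesis; if $2 \le i \le n-1$, then $[a_i, b_i] \setminus (a_{i-1}, b_{i-1}) \subseteq [a_i, b_i]$, so $g_i, h_i \in \mathbb{X}_{[a_i, b_i]}$. For the outside pair: if $i+1 \le n-1$, the hypothesis places $g_{i+1}, h_{i+1}$ in $\mathbb{X}_{[a_{i+1}, b_{i+1}] \setminus (a_i, b_i)} \subseteq \mathbb{X}_{\mathcal{I} \setminus (a_i, b_i)}$; if $i+1 = n$, then the nesting $[a_i, b_i] \subseteq [a_{n-1}, b_{n-1}]$ gives $(a_i, b_i) \subseteq (a_{n-1}, b_{n-1})$, so $\mathbb{X}_{\mathcal{I}\setminus (a_{n-1}, b_{n-1})} \subseteq \mathbb{X}_{\mathcal{I}\setminus(a_i, b_i)}$, and hence $g_n, h_n \in \mathbb{X}_{\mathcal{I}\setminus(a_i, b_i)}$. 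Thus the setup of Theorem \ref{MT4} is legitimately in force on $[a_i,b_i]$.

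Next I would check the functional identities. The two equalities in (\ref{ME19}) give $L_i(g_i) = L_{i+1}(g_{i+1})$ and $L_i(h_i) = L_{i+1}(h_{i+1})$, which match $L(g_1) = H(h_1)$ and $L(g_2) = H(h_2)$ in the roles assigned above; and the quadratic balance in (\ref{ME20}) is precisely the second condition in (\ref{ME12}). Since $f \in K^c_1(\mathcal{I})$ and is continuous with $f(g_i) \in \mathbb{X}$ (and similarly for $h_i$ by the implicit symmetric assumption), Theorem \ref{MT4} applies and delivers
\[
L_{i+1}(f(g_{i+1})) - L_i(f(g_i)) \;\le\; L_{i+1}(f(h_{i+1})) - L_i(f(h_i)),
\]
which is exactly (\ref{ME21}) for the chosen $i$. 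As $i$ was arbitrary, the corollary follows.

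There is no genuine obstacle in this argument; the only care needed is the bookkeeping of which function lies in which subset of $\mathcal{I}$ so that the ``inside/outside'' dichotomy of Theorem \ref{MT4} is unambiguously satisfied for each $i$. This is handled uniformly by the chain $[a_1, b_1] \subseteq \cdots \subseteq [a_{n-1}, b_{n-1}] \subseteq \mathcal{I}$ and is verified once and for all in the preceding paragraph.
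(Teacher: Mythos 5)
Your proposal is correct and is exactly the intended derivation: the paper states Corollary \ref{MC2} without proof as the evident iterated application of Theorem \ref{MT4} (mirroring how Corollary \ref{IC2} follows from Theorem \ref{IT2}), and your role assignment $L=L_i$, $H=L_{i+1}$ with $(g_i,h_i)$ inside $[a_i,b_i]$ and $(g_{i+1},h_{i+1})$ outside, together with the verification of the range and balance conditions, is precisely that argument. You also rightly flag that $f(h_i)\in\mathbb{X}$ is needed though only $f(g_i)\in\mathbb{X}$ is written in the statement.
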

\begin{corollary}\label{MC3}
Let $\mathcal{I} \subset \mathbb{R}$ be a closed interval, and let functions $g_{i}, h_{i} \in \mathbb{X}_{\mathcal{I}}$ for $i = 1,...,n$. Let $f \in K^{c}_{1}(\mathcal{I})$ be continuous function such that $f(g_{i}) , f(h_{i}) \in \mathbb{X}.$\\
Then every n-tuple of positive linear functionals $L_{i}:\mathbb{X} \rightarrow \mathbb{R}$ with $\sum_{i = 1}^{n} L_{i}(1) =1$ such that
\begin{equation}\label{ME22}
\sum_{1 = 1}^{n} L_{i}((g_{i})^{2}) - \big(\sum_{1 = 1}^{n} L_{i}(g_{i})\big)^{2} = \sum_{1 = 1}^{n} L_{i}((h_{i})^{2}) - \big(\sum_{1 = 1}^{n} L_{i}(h_{i})\big)^{2}
\end{equation}
 satisfies the inclusion
\begin{equation}\label{ME23}
\sum_{1 = 1}^{n} L_{i}(g_{i}), \sum_{1 = 1}^{n} L_{i}(g_{i}) \in \mathcal{I}
\end{equation}
and the inequality
\begin{equation}\label{ME24}
\sum_{1 = 1}^{n} L_{i}(f(g_{i})) - f\big(\sum_{1 = 1}^{n} L_{i}(g_{i})\big) \leq \sum_{1 = 1}^{n} L_{i}(f(h_{i})) - f\big(\sum_{1 = 1}^{n} L_{i}(h_{i})\big)
\end{equation}
\end{corollary}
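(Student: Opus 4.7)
The plan is to mirror Theorem \ref{MT4}, replacing its pair of functionals $(L,H)$ with the $n$-tuple $(L_1,\dots,L_n)$ and using Corollary \ref{IC3} in place of Theorem \ref{IT2}. Since $f\in K_1^c(\mathcal{I})$, fix the constant $A$ of Definition \ref{MD1} so that $F(x)=f(x)-\frac{A}{2}x^{2}$ is concave on $\mathcal{I}\cap(-\infty,c]$ and convex on $\mathcal{I}\cap[c,\infty)$, and read the statement with the values of $g_{i}$ lying in the left half and those of $h_{i}$ in the right half (the natural role-division already used in Theorem \ref{MT4} and Corollary \ref{MC2}).

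The first step is to apply Corollary \ref{IC3} twice to $F$: once to the convex restriction on $\mathcal{I}\cap[c,\infty)$ with the $h_{i}$'s, which yields $F\bigl(\sum_{i}L_{i}(h_{i})\bigr)\le \sum_{i}L_{i}(F(h_{i}))$; and once in reversed form (obtained by applying Corollary \ref{IC3} to $-F$ on the concave half) with the $g_{i}$'s, which yields $\sum_{i}L_{i}(F(g_{i}))\le F\bigl(\sum_{i}L_{i}(g_{i})\bigr)$. Substituting $F(x)=f(x)-\frac{A}{2}x^{2}$ into these two inequalities and collecting the quadratic terms gives the $n$-tuple analogues of (\ref{ME14}) and (\ref{ME15}): the differences $\sum_{i}L_{i}(f(g_{i}))-f\bigl(\sum_{i}L_{i}(g_{i})\bigr)$ and $\sum_{i}L_{i}(f(h_{i}))-f\bigl(\sum_{i}L_{i}(h_{i})\bigr)$ are bounded, respectively from above and from below, by $\frac{A}{2}$ times the two ``variance'' expressions appearing in hypothesis (\ref{ME22}).

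The second step invokes (\ref{ME22}) to equate those two variance expressions, so the $\frac{A}{2}$ bounds coincide and chain to give (\ref{ME24}) directly, in fact producing the sharpened sandwich analogous to (\ref{ME16}). The inclusion (\ref{ME23}) is immediate from the elementary property that any unital positive functional on $\mathbb{X}$ maps $\mathbb{X}_{\mathcal{I}}$ into the closed interval $\mathcal{I}$, applied to the combined functional $\Lambda(\varphi)=\sum_{i}L_{i}(\varphi)$, which is unital positive thanks to the normalization $\sum_{i}L_{i}(1)=1$.

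The main obstacle is that $F$ has curvature of a fixed sign only on each half of $\mathcal{I}$, so some split of the $g_{i}$- and $h_{i}$-ranges across $c$ is needed to make both applications of Corollary \ref{IC3} legitimate; once that convention (consistent with Section~2) is in place, the rest is a routine expansion-and-cancellation argument mirroring Theorem \ref{MT4}.
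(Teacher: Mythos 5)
Your proof is correct and is exactly the argument the paper intends: the paper states Corollary~\ref{MC3} without proof, but your two applications of Corollary~\ref{IC3} to $F(x)=f(x)-\frac{A}{2}x^{2}$ (reversed on the concave half $\mathcal{I}\cap(-\infty,c]$ for the $g_{i}$, direct on the convex half $\mathcal{I}\cap[c,\infty)$ for the $h_{i}$), followed by expansion of $F$ and cancellation of the two variance terms via (\ref{ME22}), mirror the proof of Theorem~\ref{MT4} in precisely the way the surrounding results are built. Your closing observation is moreover a genuine correction rather than a mere convention: as literally stated, with $g_{i},h_{i}\in\mathbb{X}_{\mathcal{I}}$ unrestricted, the corollary fails (interchanging the roles of the $g_{i}$ and $h_{i}$ leaves hypothesis (\ref{ME22}) unchanged but would reverse (\ref{ME24})), so the implicit hypothesis that the $g_{i}$ take values in $\mathcal{I}\cap(-\infty,c]$ and the $h_{i}$ in $\mathcal{I}\cap[c,\infty)$ must indeed be added, exactly as you do.
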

\begin{theorem}\label{MT5}
Let $\mathcal{I} \subset \mathbb{R}$ be a closed interval, let $[a, b] \subset \mathcal{I}$, let function $g_{i} , g^{*}_{i}\in \mathbb{X}_{[a, b]}$ for $i = 1,...,n$ and $h_{i} , h^{*}_{i}\in \mathbb{X}_{\mathcal{I}\backslash(a, b)}$ for $j = 1,...,m.$ Let $f \in K^{c}_{1}(\mathcal{I})$ be continuous function such that $f(g_{i}), f(g^{*}_{i}), f(h_{i}), f(h^{*}_{i}) \in \mathbb{X}.$\\
If two pair of n-tuple of positive linear functionals $L_{i}, L^{*}_{i}, H_{j}, H^{*}_{j} :\mathbb{X} \rightarrow \mathbb{R}$ with $$\sum_{i = 1}^{n} L_{i}(1)
= \sum_{1 = 1}^{n} L^{*}_{i}(1) = \sum_{j = 1}^{m} H_{j}(1) = \sum_{j = 1}^{m} H^{*}_{j}(1) = 1$$ satisfy
\begin{equation}\label{ME25}
\sum_{j = 1}^{m} H_{j}(h_{j}) = \sum_{i = 1}^{n} L_{i}(g_{i})\;\;and\;\; \sum_{j = 1}^{m} H^{*}_{j}(h^{*}_{j}) = \sum_{i = 1}^{n} L^{*}_{i}(g^{*}_{i})
\end{equation}
and
\begin{equation}\label{ME26}
\sum_{j = 1}^{m} H_{j}((h_{j})^{2}) - \sum_{i = 1}^{n} L_{i}((g_{i})^{2}) = \sum_{j = 1}^{m} H^{*}_{j}((h^{*}_{j})^{2}) - \sum_{i = 1}^{n} L^{*}_{i}((g^{*}_{i})^{2}).
\end{equation}
Then
\begin{equation}\label{ME27}
\sum_{j = 1}^{m} H_{j}f(h_{j}) - \sum_{i = 1}^{n} L_{i}f(g_{i}) \leq \sum_{j = 1}^{m} H^{*}_{j}f(h^{*}_{j}) - \sum_{i = 1}^{n} L^{*}_{i}f(g^{*}_{i}s)
\end{equation}
\end{theorem}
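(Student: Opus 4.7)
The plan is to mimic the two-sided argument employed in Theorem \ref{MT4}, but replace the single-functional evaluations by their IT3-type summed counterparts. Since $f \in K^{c}_{1}(\mathcal{I})$, I obtain a constant $A$ for which the auxiliary function $F(x) = f(x) - \frac{A}{2}x^{2}$ is concave on $\mathcal{I} \cap (-\infty, c]$ and convex on $\mathcal{I} \cap [c, \infty)$. The two pairs of $n$-tuples in the statement are then destined for the two different sides: the unstarred data $(L_{i}, H_{j}, g_{i}, h_{j})$ will be processed on the concave half, and the starred data $(L^{*}_{i}, H^{*}_{j}, g^{*}_{i}, h^{*}_{j})$ on the convex half.

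First, I apply Theorem \ref{IT3} in its reverse form, which holds because $F$ is concave on $\mathcal{I} \cap (-\infty, c]$, to the unstarred data using the first equation in (\ref{ME25}); expanding $F = f - \tfrac{A}{2}x^{2}$ and rearranging delivers
\begin{equation*}
\sum_{j=1}^{m} H_{j}(f(h_{j})) - \sum_{i=1}^{n} L_{i}(f(g_{i})) \leq \frac{A}{2}\Bigl(\sum_{j=1}^{m} H_{j}(h_{j}^{2}) - \sum_{i=1}^{n} L_{i}(g_{i}^{2})\Bigr).
\end{equation*}
Next, I apply Theorem \ref{IT3} directly, because $F$ is convex on $\mathcal{I} \cap [c, \infty)$, to the starred data using the second equation in (\ref{ME25}), obtaining
\begin{equation*}
\frac{A}{2}\Bigl(\sum_{j=1}^{m} H^{*}_{j}((h^{*}_{j})^{2}) - \sum_{i=1}^{n} L^{*}_{i}((g^{*}_{i})^{2})\Bigr) \leq \sum_{j=1}^{m} H^{*}_{j}(f(h^{*}_{j})) - \sum_{i=1}^{n} L^{*}_{i}(f(g^{*}_{i})).
\end{equation*}

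The conclusion (\ref{ME27}) then follows by chaining these two displays together through the moment-matching hypothesis (\ref{ME26}), which asserts precisely that the two bracketed quantities on the right-hand side of the first display and the left-hand side of the second display coincide; the common value $\tfrac{A}{2}(\cdots)$ is thereby sandwiched between the two expressions of interest, and transitivity yields the desired inequality. This structural pattern is exactly parallel to the way (\ref{ME13}) emerges from (\ref{ME14})--(\ref{ME16}) in Remark \ref{MR4}.

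The only genuine obstacle is bookkeeping: I must verify that the hypotheses of Theorem \ref{IT3}, namely positivity of the $L_{i}, H_{j}, L^{*}_{i}, H^{*}_{j}$, the unital-sum conditions $\sum L_{i}(1) = \sum H_{j}(1) = \sum L^{*}_{i}(1) = \sum H^{*}_{j}(1) = 1$, and the $\mathbb{X}$-membership of the composed functions, transfer cleanly from the hypotheses of Theorem \ref{MT5}, and that the same constant $A$ is used in both applications (this is automatic, since $A$ is fixed by the $3$-convex-at-$c$ decomposition of $f$ alone). Because $A$ cancels by virtue of (\ref{ME26}), no sign analysis on $A$ is required here, in contrast with the more delicate weaker-hypothesis variants Theorems \ref{MT2} and \ref{MT3}.
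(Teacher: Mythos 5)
Your argument is correct and is exactly the proof the paper intends (the paper in fact omits a proof of Theorem \ref{MT5}, but your two-sided application of Theorem \ref{IT3} to $F=f-\tfrac{A}{2}x^{2}$ --- reversed on the concave half for the unstarred data, direct on the convex half for the starred data --- followed by cancellation of the $\tfrac{A}{2}(\cdots)$ terms via (\ref{ME26}) is precisely the multi-functional analogue of the proof given for Theorem \ref{MT4}). The only caveat, which you inherit from the paper's own statement and its treatment of Theorem \ref{MT4} rather than introduce yourself, is the implicit assumption that the unstarred functions take values in $\mathcal{I}\cap(-\infty,c]$ and the starred ones in $\mathcal{I}\cap[c,\infty)$, without which the concavity/convexity of $F$ cannot be invoked on the respective data.
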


\section*{Acknowledgement} 
The authors express their gratitude to the referees for their valuable comments.



\end{document}